\newcommand{\R}{{\mathbb{R}}}
\newcommand{\N}{{\mathbb{N}}}
\DeclareMathOperator{\argmin}{argmin}
\renewcommand{\epsilon}{\varepsilon}
\renewcommand\O[1]{\mathcal O_{#1}}
\newcommand{\energy}{{\mathcal{E}}}
\theoremstyle{plain}% default
\newtheorem{thm}{Theorem}[section]
\newtheorem{lem}[thm]{Lemma}
\newtheorem{prop}[thm]{Proposition} %i tre ambienti hanno lo stesso contatore
\theoremstyle{definition} % il corpo dell'enunciato non viene italicizzato
\newtheorem{defi}{Definition}
\newtheorem{rmk}{Remark}
\title{A dyadic model on a tree}
\author{David Barbato \and Luigi Amedeo Bianchi \and Franco Flandoli \and Francesco Morandin}
\date{\today}
\begin{document}
\maketitle
%\listoftodos
\section{Introduction}
\label{sec:the_model}
A classical scheme used to explain energy cascade in turbulence, see
e.g.~\cite{K41} and~\cite{MR1428905}, is based on the picture of the
fluid as composed of eddies of various sizes. Larger eddies split into
smaller ones because of dynamical instabilities and transfer their
kinetic energy from their scale to the one of the smaller eddies. One
can think of a tree-like structure where nodes are eddies; any
substructure father-offsprings, where we denote the father by $j\in J$
($J$ the set of nodes) and the set of offspring by $\O j$, corresponds
to an eddy $j$ and the set $\O j$ of smaller eddies produced by $j$ by
instability. In the simplest possible picture, eddies belong to
specified discrete levels, \emph{generations}: level 0 is made of the
largest eddy, level 1 of the eddies produced by level zero, and so
on. The generation of eddy $j$ may be denoted by $|j|$. Denote also
the father of eddy $j$ by $\bar{\jmath}$.

Phenomenologically, we associate to any eddy $j$ a non-negative
\emph{intensity} $X_{j}(t) $, at time~$t$, such that the
kinetic energy of eddy $j$ is $X_{j}^{2}(t) $. We relate
intensities by a differential rule, which prescribes that the
intensity of eddy $j$ increases because of a flux of energy from
$\bar{\jmath}$ to $j$ and decreases because of a flux of energy
from $j$ to its set of offspring $\O j$. We choose the
rule%
\begin{equation}
\frac{d}{dt}X_{j}=c_{j}X_{\bar{\jmath}}^{2}-\sum_{k\in\O j}%
c_{k}X_{j}X_{k}\label{tree-like_dyadic}%
\end{equation}
where the coefficients $c_{j}$ are positive.

This model has been introduced by Katz and Pavlovi\'c~\cite{MR2095627}
as a simplified wavelet description of Euler equations, suitable for
understanding the energy cascade. The coefficients $c_j=2^{\alpha|j|}$
represent in our model the speed of the energy flow from an eddy to
its children. The coefficient $\alpha$ is an approximation, averaged
in time and space, of the rate of this speed. Regarding solutions of
Euler equations in dimension 3, it may happen (usually as a short term
phenomenon) that this speed is higher or lower, sometimes that the
process itself is reversed, that is the energy flows from the smaller
eddies to the bigger ones: this is known as
intermittency. In~\cite{MR2522972} and~\cite{CheFriPav2007} it is
shown using Bernstein's inequality that the rate~$\beta$ for the
dyadic 3D Euler model lies in the interval~$[1,\frac52]$ which
corresponds to~$\alpha\in[\frac52,4]$ for the tree dyadic model.  As
explained in section~\ref{subsection_K41}, the order of magnitude of
$c_j$ that correspond to K41 is:
\begin{equation}\label{cj_from_KP}
c_j\sim2^{\frac52|j|}.
\end{equation}

The tree dyadic model~\eqref{tree-like_dyadic} is a more structured
version of the so called dyadic model of turbulence. The latter is
based on variables $Y_{n}$ which represent a cumulative intensity of
shell $n$ (shell in Fourier or wavelet space) $n=0,1,2,\ldots$ Here,
on the contrary, shell $n$ is described by a set of variables, all
$X_{j}$'s with $\left\vert j\right\vert =n$, the different intensities
of eddies of generation $n$. The equations for $Y_{n}$ have the form%
\begin{equation}
\frac{d}{dt}Y_{n}=k_nY_{n-1}^{2}-k_{n+1}%
Y_{n}Y_{n+1}.\label{simple_dyadic}%
\end{equation}
Model~\eqref{tree-like_dyadic} is thus a little bit more realistic
than~\eqref{simple_dyadic}, although it is still extremely idealized
with respect to the true Fourier description of Euler equations.

All these models are formally conservative: the global kinetic energy
$\energy(t)=\sum_jX_j^2(t)$, or $\energy(t)=\sum_nY_n^2(t)$ depending
on the case, is formally constant in time; it can be easily seen in
both cases, using the telescoping structure of the series
$\dfrac{d\energy(t)}{dt}$.  However, in previous papers
(\cite{CheFriPav2007},~\cite{MR2746670}) it has been shown that the
dyadic model~\eqref{simple_dyadic} is not rigorously
conservative:\ anomalous dissipation occurs. The flux of energy to
high values of $n$ becomes so fast after some time of evolution that,
in finite time, part of the energy escapes to infinity in $n$.

The same question for the tree dyadic model~\eqref{tree-like_dyadic}
is more difficult. Intuitively, it is not clear what to expect. Even
if the global flux from a generation to the next one behaves similarly
to the shell case~\eqref{simple_dyadic}, energy may split between
eddies of the same generation, which increase exponentially in
number. Hence there is a lot of ``space'' (a lot of eddies) to
accommodate the large amount of energy which comes from progenitors in
the cascade.

The main result of this paper, Theorem~\ref{thm:main_1}, is the proof
of anomalous dissipation also for model~\eqref{tree-like_dyadic}. To
be precise, we have dissipation for a class of coefficients $c_{j}$
which covers~\eqref{cj_from_KP}. The proof is similar to the one
in~\cite{MR2746670} but requires new ideas and ingredients.

Apart from anomalous dissipation, we consider also stationary
solutions, showing the existence and uniqueness of such solutions in
Theorems~\ref{thm:main_2} and~\ref{thm:main_3}. This kind of argument
allows and requires a more general model to be studied, namely, one
needs to insert a forcing term (to find nontrivial stationary
solutions) and we are able to treat also the viscous analogous of the
tree dyadic model, adding the viscosity term $-\nu2^{\gamma|j|} X_j$ to
equation~\eqref{tree-like_dyadic}.  The most general model that we
introduce is thus system~\eqref{eq:system_tree_dyadic_general}.

In Section~\ref{sec:main_results} we describe the model and give a
short summary of the main results of the paper.

In Section~\ref{sec:elem-prop} we discuss elementary properties of the
model and prove the existence of finite energy solutions.

In Section~\ref{sec:dyadic-model-tree-vs-classical} we exploit the
connection between the ``classic'' dyadic model on naturals and the
tree dyadic model. If the number of children is constant for every
node in the tree, then from each solution on the former one can build
a ``lifted'' version on the tree which is a solution of the latter.

Section~\ref{sec:anom-diss} is devoted to the proof of the anomalous
dissipation Theorem~\ref{thm:main_1} in the inviscid unforced
case. Self-similar solutions are also discussed.

In Section~\ref{sec:stat-sol} we study the stationary solutions. We
prove existence and uniqueness of stationary solutions of classic and
tree forced systems~\eqref{eq:system_classic_dyadic_general}
and~\eqref{eq:system_tree_dyadic_general} with and without viscosity.
Here the positive force $f$ is required because otherwise the unique
non-negative stationary solution is the null one.

\subsection{The decay of $X_{j}$ corresponding to K41 and anomalous
dissipation\label{subsection_K41}}

In the case of the classic dyadic model~\eqref{simple_dyadic},
Kolmogorov inertial range spectrum reads
\[
Y_{n}\sim k_{n}^{-1/3}.
\]
The exponent is intuitive in such case. For the tree dyadic
model~\eqref{tree-like_dyadic} the correct exponent may look
unfamiliar and thus we give a heuristic derivation of it. The result
is that Kolmogorov inertial range spectrum corresponds to
\begin{equation}
X_{j}\sim2^{-\frac{11}{6}|j|}.\label{K41}%
\end{equation}

K41 theory \cite{K41} states that, if $u\left(  x\right)  $ is the velocity of
the turbulent fluid at position $x$ and the expected value $E$ is suitably
understood (for instance if we analyze a time-stationary regime), one has%
\[
E\bigl[|u(x)-u(y)|^2\bigr]\sim|x-y|^{2/3}%
\]
when $x$ and $y$ are very close each other (but not too close). Very
vaguely this means
\[
|u(x)-u(y)|\sim|x-y|^{1/3}.
\]
Following Katz-Pavlovi\'c \cite{MR2095627}, let us think that $u(x)$
may be written in a basis~$(w_j)$ (which are norm-one vectors in
$L^2$) as
\[
u(x)=\sum_{j}X_{j}w_{j}(x).
\]
The vector field $w_{j}(x)$ corresponds to the velocity field of eddy
$j$. Let us assume that eddy $j$ has a support $Q_{j}$ of the order of
a cube of side $2^{-|j|}$. Given $j$, take $x,y\in Q_{j}$. When we
compute $u(x)-u(y)$ we use the approximation $u(x)=X_{j}w_{j}(x)$,
$u(y)=X_{j}w_{j}(y)$. Then
\[
|u(x)-u(y)|=|X_j|\,|w_{j}(x)-w_{j}(y)|
\]
namely
\[
|X_{j}|\,|w_{j}(x)-w_{j}(y)|\sim|x-y|^{1/3},\qquad x,y\in Q_{j}.
\]
We consider reasonably correct this approximation when $x,y\in Q_{j}$ have a
distance of the order of $2^{-\left\vert j\right\vert }$, otherwise we should
use smaller eddies in this approximation. Thus we have%
\begin{equation}\label{passo_preliminare}
|X_{j}|\, |w_{j}(x)-w_{j}(y)| \sim2^{-\frac{1}{3}|j| },\qquad
x,y\in Q_{j},|x-y| \sim2^{-|j|}.
\end{equation}
Moreover, we have
\begin{equation}\label{mean_value}
|w_{j}(x)-w_{j}(y)|=|\nabla w_{j}(\xi)|\,|x-y| 
\end{equation}
for some point $\xi$ between $x$ and $y$ (to be precise, the mean
value theorem must be applied to each component of the vector valued
function $w_{j}$). Recall that $\int w_{j}(x)^{2}dx=1$, hence the
typical size $s_{j}$ of $w_{j}$ in $Q_{j}$ can be guessed from
$s_{j}^{2}2^{-3|j|}\sim1$, namely $s_{j}\sim2^{\frac32|j|}$. Since
$w_{j}$ has variations of order $s_{j}$ at distance $2^{-|j|}$, we
deduce that the typical values of $\nabla w_{j}$ in $Q_{j}$ have the
order $2^{\frac32|j|}/2^{-|j|}=2^{\frac52|j|}$. Thus, from~\eqref{mean_value},
\[
|w_{j}(x)-w_{j}(y)|\sim2^{\frac{5}{2}|j|}2^{-|j|}.
\]
Along with \eqref{passo_preliminare} this gives us
\[
|X_{j}|2^{\frac{5}{2}|j|}2^{-|j|}\sim2^{-\frac{1}{3}|j|}
\]
namely
\[
|X_{j}|\sim2^{(-\frac{1}{3}+1-\frac{5}{2})|j|}=2^{-\frac{11}{6}|j|}.
\]
We have established~\eqref{K41}, on a heuristic ground of course.

Let us give a heuristic explanation of the fact that, when anomalous
dissipation occurs, the decay~\eqref{K41} appears. In a sense, this
may be seen as a confirmation that~\eqref{K41} is the correct decay
corresponding to K41. Let us start from
equations~\eqref{tree-like_dyadic} with $c_{j}\sim2^{\frac{5}{2}|j|}$,
the Katz-Pavlovi\'c prescription.  Let $\energy_{n}$ be the energy up
to generation $n$:
\[
\energy_{n}=\sum_{|j|\leq n}X_{j}^{2}.
\]
Then, as will be seen later with equation~\eqref{eq:energy_derivative},
\[
\frac{d\energy_{n}}{dt}=-2^{\frac{5}{2}(n+1)}\sum_{|k|=n+1}X_{\bar k}^{2}X_{k}.
\]
In order to have anomalous dissipation, we should have
\[
\frac{d\energy_{n}}{dt}\overset{n}{\sim}-C\neq0.
\]
If we assume a power decay
\[
X_{j}\sim2^{-\eta|j| }.
\]
Then, since the cardinality of $\{Q_j:|j|=n\}$ should be of the order
of $2^{3n}$,
\[
2^{\frac{5}{2}(n+1)}\sum_{|k|=n+1}X_{\bar k}^{2}X_{k}
\sim2^{\frac{5}{2}n}2^{3n}2^{-3\eta n}
=2^{(\frac{11}{2}-3\eta)n}
\]
and thus $\eta=\frac{11}{6}$.

\section{Model and main results}\label{sec:main_results}
Let $J$ be the set of nodes. Inside $J$ we identify one special node,
called root or ancestor of the tree, which is denoted by 0. For all
$j\in J$ we define the generation number $|j|\in \N$ (such that
$|0|=0$), the set of offsprings of $j$, denoted by $\O j\subset J$,
such that $|k|=|j|+1$ for all $k\in \O j$ and a unique parent
$\bar\jmath$ with $j\in \O{\bar\jmath}$. The root 0 has no parent
inside $J$, but with slight notation abuse we will nevertheless use
the symbol $\bar0$ when needed.

For sake of simplicity we will suppose throughout the paper that the
cardinality of $\O j$ is constant, $\sharp \O j=:N_*$ for all $j\in
J$, but some results can be easily generalized at least to the case
where $\sharp\O j$ is positive and uniformly bounded.

It will turn out to be very important to compare $N_*$ to some
coefficients of the model. To this end we set also
$\tilde\alpha:=\frac12\log_2N_*$ so that $N_*=2^{2\tilde\alpha}$

The dynamics of the tree dyadic model is described by a family
$(X_j)_{j\in J}$ of functions $X_j:[0,\infty)\rightarrow\R$. Its
  general formulation is described by the equations below. (Notice
  that $X_{\bar 0}$ does not belong to the family and merely
  represents a convenient symbolic alias for the constant forcing
  term.)
\begin{equation}\label{eq:system_tree_dyadic_general}
\left\{\begin{aligned}
X_{\bar0}(t)&\equiv f \\
\frac d{dt} X_j 
&= -\nu d_j X_j + c_j X_{\bar\jmath}^2-\sum_{k\in\O j}c_{k}X_jX_k,
      &\forall j&\in J
\end{aligned}\right.
\end{equation}
Here we suppose that $f\geq0$, $\nu\geq 0$, and that the other
coefficients have an exponential behavior, namely $c_j=2^{\alpha
  |j|}$, $d_j=2^{\gamma |j|}$ with $\alpha>0$ and $\gamma>0$.

If $f=0$ we call the system \emph{unforced}, if $\nu=0$ we call it
\emph{inviscid}.

This system will usually come with an initial condition which will be
denoted by $X^0=(X^0_j)_{j\in J}$. One natural space for $X(t)$ to
live is $l^2(J;\R)$, which we will simply denote by $l^2$, the setting
being understood. The $l^2$ norm will be simply denoted by
$\|\cdot\|$.

\begin{defi}
Given $X^0 \in \R^J$, we call componentwise solution of
system~\eqref{eq:system_tree_dyadic_general} with initial condition
$X^0$ any family $X = (X_j)_{j\in J}$ of continuously differentiable
functions $X_j:[0,\infty)\to\R$ such that $X(0)=X^0$ and all equations
  in system~\eqref{eq:system_tree_dyadic_general} are satisfied.  If
  moreover $X(t)\in l^2$ for all $t\geq0$, we call it an $l^2$
  solution.

We say that a solution is positive if $X_j(t)\geq0$ for all $j\in J$
and $t\geq 0$.
\end{defi}

Existence of positive $l^2$ solutions is classical and can be found in
Section~\ref{sec:elem-prop}, while uniqueness is an open problem.

This system of equations is locally conservative, in the sense made
rigorous by Proposition~\ref{prop:energy_balance} below, where the
following energy balance inequality is proven
\[
\|X(t)\|^2
\leq\|X(s)\|^2+2f^2\int_s^tX_0(u)du-2\nu\sum_{j\in J}d_j\int_s^tX_j^2(u)du
\]
It turns out that in some cases this is in fact an equality and in
some cases it is a strict inequality. When the latter happens we say
that anomalous dissipation occurs.

The main results of the paper deal with anomalous dissipation and
stationary solutions.

\subsubsection*{Anomalous dissipation on the inviscid, unforced tree dyadic model.}
The proof of the next result is given in Section~\ref{sec:anom-diss}.
\begin{thm}\label{thm:main_1}
Let $\sharp\O j=2^{2\tilde\alpha}$ for all $j$. Suppose
$\tilde\alpha<\alpha$ and $f=\nu=0$ in
equations~\eqref{eq:system_tree_dyadic_general}. Let $X$ be any
positive $l^2$ solution with initial condition $X^0$. Then there
exists $C>0$, depending only on $\|X^0\|$, such that for all $t>0$
\[
\energy(t):=\|X(t)\|^2:=\sum_{j\in J} X_j^2(t)<\frac{C}{t^2}\ .
\]
\end{thm}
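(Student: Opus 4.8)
The plan is to follow the strategy of the anomalous-dissipation proof for the classical dyadic model in~\cite{MR2746670}, adapted to the tree. The starting point is the energy-balance identity for the truncated energies $\energy_n(t)=\sum_{|j|\le n}X_j^2(t)$. Since $f=\nu=0$, differentiating and telescoping the flux terms gives
\[
\frac{d\energy_n}{dt}=-\sum_{|k|=n+1}c_k X_{\bar k}^2 X_k\le 0,
\]
so each $\energy_n$ is nonincreasing and $\energy(t)=\lim_n\energy_n(t)\le\|X^0\|^2$. The heart of the matter is to show that the \emph{total} energy $\energy(t)$ itself decays like $t^{-2}$, i.e.\ that the flux to infinity is quantitatively large whenever $\energy(t)$ is not yet small.

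First I would derive a differential inequality for $\energy(t)$ of the form $\energy'(t)\le -c\,\energy(t)^{?}$ with the right power, but this is too naive on the tree because, as the introduction stresses, energy can spread among the exponentially many eddies of a generation. The correct device is to introduce a weighted ``low-frequency'' energy, something like $\Phi(t)=\sum_{j\in J}a_{|j|}X_j^2(t)$ with weights $a_n$ decreasing in $n$ (for instance $a_n=2^{-\delta n}$ for a suitable small $\delta>0$, chosen using $\tilde\alpha<\alpha$), or alternatively to work with the quantity $\sum_j 2^{\beta|j|}X_j(t)$ for a well-chosen $\beta$. The point of such a functional is that its time derivative picks up, besides the (sign-indefinite) internal transfer terms, a genuinely negative contribution coming from the gap between $\alpha$ and $\tilde\alpha$: the number of offspring $N_*=2^{2\tilde\alpha}$ is dominated by the growth $c_k=2^{\alpha|k|}$ of the coefficients, so summing $c_k X_{\bar k}^2 X_k$ over the $N_*$ children of a node, with Cauchy--Schwarz, still beats the multiplicity. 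Concretely I would bound, for each fixed parent $j$,
\[
\sum_{k\in\O j}c_k X_{\bar k}^2 X_k=c_{|j|+1}X_j^2\sum_{k\in\O j}X_k\ge c_{|j|+1}X_j^2\Bigl(\sum_{k\in\O j}X_k^2\Bigr)^{1/2},
\]
using positivity of the $X_k$ and $\|x\|_1\ge\|x\|_2$; one then wants to feed the right-hand side back into a closed inequality for the chosen functional, producing $\Phi'(t)\le -c\,\Phi(t)^{3/2}$ or an equivalent, whose solutions decay polynomially. Integrating such a Riccati-type inequality and recalling that $\Phi$ controls $\energy$ from below (the weights being bounded below on each finite level and the comparison being uniform in $\|X^0\|$) yields $\energy(t)\le C/t^2$ with $C$ depending only on $\|X^0\|$.

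The main obstacle, and where the ``new ideas'' promised in the introduction enter, is precisely this spreading phenomenon: unlike the linear chain, a single eddy feeds $N_*$ descendants, so the naive estimate loses a factor that grows with the generation, and one must exploit $\alpha>\tilde\alpha$ to close the loop. I expect the real work to be (i) choosing the weights $a_n$ (or the exponent $\beta$) so that the transfer terms telescope favorably and the multiplicity $N_*$ is absorbed, and (ii) controlling the internal, sign-indefinite terms $\sum_{k\in\O j}c_k X_j X_k$ that appear when differentiating a weighted sum — these have to be shown either to cancel in pairs across consecutive generations (as in the unweighted energy identity) or to be dominated by the negative flux term, which again uses the gap $\alpha-\tilde\alpha>0$. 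Once the differential inequality $\Phi'\le -c\,\Phi^{3/2}$ is in hand the conclusion is immediate, and the uniformity of $C$ in $\|X^0\|$ follows from the fact that $\Phi(0)\le\|X^0\|^2$ and $\energy(t)\le C'\Phi(t)$ with absolute constants.
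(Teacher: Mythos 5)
There is a genuine gap at the center of your plan: the Riccati-type inequality $\Phi'(t)\le -c\,\Phi(t)^{3/2}$ cannot hold pointwise in time for any functional comparable to the energy. For $\Phi(t)=\sum_j a_{|j|}X_j^2(t)$ with decreasing weights, the derivative along solutions telescopes to a pure flux term, $\Phi'=-2\sum_{k\neq 0}c_k\bigl(a_{|k|-1}-a_{|k|}\bigr)X_{\bar k}^2X_k+2a_0c_0X_{\bar 0}^2X_0$, and with $f=0$ this vanishes identically at any instant when the energy is concentrated on a single generation (e.g.\ $X^0$ supported on the root), while $\Phi^{3/2}>0$ there; the same objection applies to $\sum_j 2^{\beta|j|}X_j$, which can even be increasing at such an instant. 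So no instantaneous lower bound on the dissipation in terms of $\Phi$ (or of $\energy$) is available, and the decay must be captured only after a delay. This is exactly the mechanism the paper uses and your plan lacks: for each generation $n$ one waits a time $h_n\sim\energy(0)^{3/2}\delta_n^{-2}2^{-(\alpha-\tilde\alpha)n}$ and proves, by contradiction, that $\sum_{|j|=n}m_j^2\le\delta_n$ where $m_j=\inf_{[t,t+h_n]}X_j$; the contradiction comes from bounding $\energy(0)$ below by the integrated flux, using the ODE comparison $\dot X_k\ge c_k m_j^2-\lambda_j X_k$ to get a lower bound for the children over the window, and Cauchy--Schwarz together with the AM--QM inequality to absorb the multiplicity $N_*^n$ — this is precisely where $\alpha>\tilde\alpha$ is spent, making $\sum_n h_n<\infty$. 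This yields only the statement ``for every $\epsilon$ the energy is below $\epsilon$ after a time $T(\epsilon,\|X^0\|)$''; the quantitative rate $C/t^2$ is then obtained not from a differential inequality but from a rescaling argument exploiting the degree-two homogeneity of the nonlinearity ($X\mapsto\vartheta X(\vartheta t+\tau)$ is again a solution), iterated to give $\energy(t_n)\le\vartheta^{-2n}\energy(0)$ with $t_n$ growing geometrically.

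A secondary but fatal flaw in the way you close the argument: with decreasing weights you only have $\Phi\le\energy$, not $\energy\le C'\Phi$ (the energy can sit at high generations where $a_{|j|}$ is tiny), so even granting $\Phi'\le-c\,\Phi^{3/2}$ you would conclude nothing about $\energy$; and if you force the weights to be bounded below you are back to the plain energy, for which the pointwise inequality fails as explained above. Your intuition that Cauchy--Schwarz plus the gap $\alpha-\tilde\alpha>0$ beats the branching factor is correct and is indeed the new ingredient of the tree case, but it has to be deployed inside the delayed, generation-by-generation contradiction scheme (Lemma~\ref{lem:en_dissip}) rather than inside an instantaneous energy inequality.
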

This theorem holds also if we use the weaker hypothesis $1\leq\sharp
\O j\leq2^{2\tilde\alpha}$ for all $j$. The statement tells us that the
energy of the system goes to zero at least as fast as $t^{-2}$.  In
Section~\ref{subsec:self_similar} we show that for this model there
are some self-similar solutions and that their energy goes to zero
exactly like $t^{-2}$. So the estimate of Theorem~\ref{thm:main_1}
cannot be improved much.

\subsubsection*{Stationary solutions for the forced classic dyadic model.}
It will be important for our purposes to switch between the tree
dyadic model and the classic one, where $J$ is simply the set of
non-negative integers with $\O j:=\{j+1\}$ for all $j$.

To avoid confusion we will use different symbols for the classic
system, whose equations are the following.
\begin{equation}\label{eq:system_classic_dyadic_general}
\left\{\begin{aligned}
Y_{-1}(t) &\equiv f\\
\frac{d}{dt} Y_n &= -\nu l_n Y_n +
      k_n Y_{n-1}^2-k_{n+1}Y_nY_{n+1},
      &\forall n&\geq0
\end{aligned}\right.
\end{equation}
with $f\geq0$, $\nu\geq 0$, $k_n=2^{\beta n}$, $l_n=2^{\gamma n}$,
$\beta>0$ and $\gamma>0$.

When this model is interpreted as a special case
of~\eqref{eq:system_tree_dyadic_general} we will have $N_*=1$,
$\tilde\alpha=0$ and $\beta=\alpha$. Observe that the definitions of
solutions given on the tree model extend easily to this one, but
notice that in this setting $l^2$ will correspond to the standard
space of sequences.

The following theorem deals with \emph{stationary} solutions, namely
solutions constant in time. We do not detail the proof, since, by what
we said above, it is a special case of the analogous statement for the
tree dyadic model, Theorem~\ref{thm:main_3} which is proven in
Section~\ref{sec:stat-sol}.
\begin{thm}\label{thm:main_2}
If $f>0$, then there exists a unique $l^2$ positive solution $Y$ of
system \eqref{eq:system_classic_dyadic_general} which is stationary.
Moreover
\begin{description}
  \item{if $\nu=0$} then $Y_n(t):=f\cdot 2^{-\frac{\beta}{3}(n+1)}$; 
  \item{if $\nu>0$ and $3\gamma \geq 2\beta $}, the stationary
    solution is conservative and \emph{regular}, in that for all real
    $s$, $\sum_n [2^{sn}Y_n(t)]^2<\infty $;
  \item{if $\nu>0$ and $3\gamma < 2\beta $}, there exists $C>0$ such
    that for all $f>C$ the invariant solution of
    \eqref{eq:system_classic_dyadic_general} is not regular and
    exhibits anomalous dissipation.
\end{description}
\end{thm}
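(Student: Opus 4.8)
The plan is to derive this from Theorem~\ref{thm:main_3} by specializing to $N_*=1$; equivalently one argues directly in the classic setting, which I now sketch. A positive stationary solution of~\eqref{eq:system_classic_dyadic_general} is a positive sequence with $Y_{-1}=f$ and $\nu l_nY_n+k_{n+1}Y_nY_{n+1}=k_nY_{n-1}^2$ for all $n\ge0$; when $f>0$ every component is strictly positive, since reading the equations backwards a vanishing $Y_n$ would force $Y_{-1}=0$. Hence the whole sequence is determined by $Y_0$ through $Y_{n+1}=(k_nY_{n-1}^2-\nu l_nY_n)/(k_{n+1}Y_n)$, so the problem is a shooting problem in the single parameter $Y_0>0$. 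For $\nu=0$ the recursion is $Y_{n+1}=2^{-\beta}Y_{n-1}^2/Y_n$, and $u_n:=\log_2Y_n$ satisfies the linear recurrence $u_{n+1}+u_n-2u_{n-1}=-\beta$, with general solution $u_n=A+B(-2)^n-\tfrac{\beta}{3}n$ (characteristic roots $1$ and $-2$). If $B\ne0$ then $Y_n$ is unbounded along the even or the odd indices, so membership in $l^2$ forces $B=0$; imposing $u_{-1}=\log_2 f$ gives $A=\log_2 f-\tfrac{\beta}{3}$, i.e.\ $Y_n=f\,2^{-\frac{\beta}{3}(n+1)}$, which one checks directly is a positive $l^2$ stationary solution. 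This settles the $\nu=0$ clause.

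For $\nu>0$, I would obtain existence by truncating at level $N$ (setting $Y_{N+1}\equiv0$): the truncated finite-dimensional system preserves the positive cone and has the forward-invariant ball $\{\|Y\|\le f^2/\nu\}$, because $\tfrac{d}{dt}\|Y\|^2=2f^2Y_0-2\nu\sum l_nY_n^2\le 2f^2\|Y\|-2\nu\|Y\|^2$, hence a positive equilibrium $Y^{(N)}$. Testing the stationarity equation at level $n$ against $2Y_n$ and telescoping yields $f^2Y_0=\nu\sum_{n\le N}l_nY_n^2+k_{N+1}Y_N^2Y_{N+1}$, which for $Y^{(N)}$ gives $\nu\sum_n l_n(Y^{(N)}_n)^2=f^2Y^{(N)}_0\le f^4/\nu$ (using $f^2=\nu Y_0+k_1Y_0Y_1\ge\nu Y_0$); since $l_n\ge1$ this bounds $\|Y^{(N)}\|$ and gives the uniform tail estimate $\sum_{n>M}(Y^{(N)}_n)^2\le 2^{-\gamma M}f^4/\nu^2$, so $\{Y^{(N)}\}$ is $l^2$-precompact and any limit point is a positive $l^2$ stationary solution (nontrivial since $Y^{(N)}_0=f^2/(\nu+k_1Y^{(N)}_1)$ is bounded below). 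For uniqueness, if $Y,Z$ are two such solutions with $Y_0\ne Z_0$, the differences $D_n:=Y_n-Z_n$ satisfy $k_{n+1}Y_nD_{n+1}=k_n(Y_{n-1}+Z_{n-1})D_{n-1}-(\nu l_n+k_{n+1}Z_{n+1})D_n$ with $D_{-1}=0$; an induction shows the sign of $D_n$ strictly alternates, and then $k_{n+1}Y_n|D_{n+1}|=k_n(Y_{n-1}+Z_{n-1})|D_{n-1}|+(\nu l_n+k_{n+1}Z_{n+1})|D_n|\ge k_{n+1}Z_{n+1}|D_n|$, so $|D_n|$ cannot decay to $0$, contradicting $D\in l^2$. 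Making this contradiction quantitative in the anomalous regime, where $Y$ decays only exponentially, is the subtle point of the uniqueness half.

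For the viscous case, write $\tilde b_N:=k_{N+1}Y_N^2Y_{N+1}$, so the telescoped identity becomes $f^2Y_0=\nu\sum_{n\ge0}l_nY_n^2+\tilde b_\infty$ with $\tilde b_\infty:=\lim_N\tilde b_N\ge0$, and anomalous dissipation for a stationary solution is exactly $\tilde b_\infty>0$. In particular $\sum_n l_nY_n^2<\infty$, so $2^{\gamma n}Y_n^2\to0$; moreover $\tilde b_N=\tilde b_\infty+\nu\sum_{m>N}l_mY_m^2\ge\nu l_{N+1}Y_{N+1}^2$, whence the bootstrap inequality $Y_{N+1}\le\tfrac1\nu2^{(\beta-\gamma)(N+1)}Y_N^2$, i.e.\ $a_{N+1}\ge 2a_N-(\beta-\gamma)(N+1)+\log_2\nu$ for $a_N:=-\log_2Y_N$. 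When $3\gamma>2\beta$ one has $\beta-\gamma<\gamma/2$, and since $a_N-\tfrac\gamma2 N\to\infty$ the subtracted term is eventually at most $\theta a_N$ for a fixed $\theta<1$; thus $a_{N+1}\ge(2-\theta)a_N+O(1)$, so $a_N/N\to\infty$, $Y$ decays faster than any exponential, the solution is regular, and a fortiori $\tilde b_\infty=0$, hence conservative. The borderline $3\gamma=2\beta$ needs one further bootstrap round, reinserting the improved decay into the same inequality.

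Finally, when $3\gamma<2\beta$, I would make the heuristic of Section~\ref{subsection_K41} rigorous: along the inviscid profile $f\,2^{-\beta(n+1)/3}$ the viscous term is of order $(\nu/f)\,2^{(\gamma-\frac23\beta)n}$ relative to the nonlinear flux, which is uniformly small in $n$ once $f$ is large, precisely because $\gamma<\tfrac23\beta$. The plan is then to build a positive subsolution of the stationarity relations decaying like $2^{-\beta n/3}$, show by the comparison used for uniqueness that the stationary solution dominates it, and conclude $\tilde b_\infty\ge c f^3>0$; since then $Y_n$ cannot decay faster than exponentially, $\sum_n[2^{sn}Y_n]^2$ diverges for large $s$, so the solution is not regular. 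Constructing this subsolution and controlling the viscous correction uniformly in $n$ and $f$ — together with the borderline case above — is where I expect the real work to be, and is the main obstacle of the whole argument.
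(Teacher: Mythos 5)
Your inviscid bullet is correct and is a genuinely different (and tidy) route: positivity of all components plus the log-linearization $u_{n+1}+u_n-2u_{n-1}=-\beta$ with roots $1,-2$ forces $B=0$ for an $l^2$ solution, giving both existence and uniqueness at once, whereas the paper gets existence from Proposition~\ref{prop:sol_sta} and uniqueness from the ratio argument of Theorem~\ref{thm:esi_unic_sol_inv}. Your viscous existence by truncation, the invariant ball $\|Y\|\le f^2/\nu$, the telescoped identity and the uniform tail bound is a legitimate alternative to the paper's shooting/nested-intervals construction in Theorem~\ref{thm:esi_sol_sta_dia_vis} (modulo quoting a Brouwer-type equilibrium theorem for the truncated flow). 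Your treatment of the regime $3\gamma\ge2\beta$, via $Y_{N+1}\le\nu^{-1}2^{(\beta-\gamma)(N+1)}Y_N^2$ and the bootstrap on $a_N=-\log_2 Y_N$, is essentially the paper's argument (there written as $Z_n<Z_{n-1}^2$ after the change of variables \eqref{eq:change_var}) and the borderline case does close as you indicate.

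The genuine gaps are the other two halves of the theorem, and you have in fact flagged both. (i) Uniqueness for $\nu>0$: from your alternating-sign identity the bound you display, $|D_{n+1}|\ge (Z_{n+1}/Y_n)|D_n|$, does not imply that $|D_n|$ stays away from $0$, because $Z_{n+1}/Y_n$ has no lower bound; in the regular regime you could repair this by keeping the term $\nu l_n/(k_{n+1}Y_n)\to\infty$, but in the anomalous regime (where stationary solutions decay only like $2^{-\beta n/3}$) all the factors in your recurrence are of order $2^{1-2\beta/3}<1$, so the argument as proposed genuinely fails, and this is not a technicality: it is the core of the uniqueness proof. The paper's resolution (Theorem~\ref{thm:esi_unic_sol_inv}) is multiplicative rather than additive: after \eqref{eq:change_var} the ratio of two solutions at the first discrepancy squares at every step, hence grows like $\lambda^{2^m}$, and this is converted into an absolute lower bound via the positivity constraint $Z_{n-1}^2/Z_n\ge\nu 2^{(\gamma-\frac23\beta)n}$ forced by the viscous term (or, when $\nu=0$, by comparison with the explicit solution), contradicting the growth permitted by \eqref{eq:Z_regularity}; nothing in your difference scheme plays this role. (ii) The third bullet ($3\gamma<2\beta$, $f$ large: not regular, anomalous dissipation) is only a plan — subsolution plus comparison — and the comparison you would invoke is precisely the uniqueness machinery you have not established. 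The paper instead shows, in the variables \eqref{eq:system_Z}, that $(Z_n)$ is non-increasing (a single ratio $\lambda>1$ propagates as $\lambda^{4^m}$ and forces some $Z_{n+2m+1}<0$), and then the summed flux identity gives $Z_n^3\ge gZ_0\bigl(g-\frac1{1-2^{\mu}}\bigr)>0$ for $g$ large, so $Z_n\downarrow z>0$; this yields both non-regularity and $\lim_n k_{n+1}Y_n^2Y_{n+1}=2^{-4\beta/3}\nu^3z^3>0$. So your proposal fully covers the first and second bullets, but uniqueness in the viscous (especially anomalous) regime and the whole third bullet remain unproved, and they require the ratio/monotonicity ideas above rather than refinements of your difference estimate.
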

\noindent
In the inviscid case, this theorem extends an analogue result of
\cite{CheFriPav2007} where it is proved for $\beta=\frac{5}{2}$.  In
the viscous case it extends a result of \cite{MR2522972}, in which
existence and uniqueness of stationary solutions are proved for
$\gamma=2$ and $\beta\in\left(\frac{3}{2},\frac{5}{2} \right]$.

\subsubsection*{Stationary solutions for the forced tree dyadic model.}
An analogous of Theorem~\ref{thm:main_2} holds for the tree dyadic
model too. This is proved in Section~\ref{sec:stat-sol}.
\begin{thm}\label{thm:main_3}
Let $\sharp\O j=2^{2\tilde\alpha}$ for all $j$. Suppose
$\tilde\alpha<\alpha$ and $f>0$ in
equations~\eqref{eq:system_tree_dyadic_general}.  Then there exists a
unique $l^2$ positive solution $X$ which is \emph{stationary}.
Moreover
\begin{description}
  \item if $\nu=0$ then $X_j(t):=f\cdot
  2^{-\frac{|j|+1}{3}(2\tilde\alpha+\alpha)}$ for all $j\in J$;
  \item if $\nu>0$ and $0<\alpha-\tilde\alpha\leq\frac32\gamma$,
    the stationary solution is conservative and
    \emph{regular}, in that for all real $s$, $\sum_{j\in
      J}[2^{s|j|}X_j(t)]^2<\infty $;
  \item if $\nu>0$ and $\alpha-\tilde\alpha>\frac32\gamma$, there
    exists $C>0$ such that for all $f>C$ the invariant solution of
    \eqref{eq:system_tree_dyadic_general} is not regular and exhibits
    anomalous dissipation.
\end{description}
\end{thm}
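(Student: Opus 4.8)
The plan is to reduce the tree problem to the classic one. Since $\sharp\O j=N_*=2^{2\tilde\alpha}$ is constant, a stationary solution that is constant across each generation (i.e.\ $X_j(t)=x_{|j|}$ depending only on $|j|$) corresponds, via the lifting map from Section~\ref{sec:dyadic-model-tree-vs-classical}, to a stationary solution $Y$ of a classic system~\eqref{eq:system_classic_dyadic_general}. One must track how the coefficients transform: summing the $N_*$ equal terms $c_k X_j X_k$ over $k\in\O j$ produces a factor $N_* c_{|j|+1}=2^{2\tilde\alpha}2^{\alpha(|j|+1)}$, so the effective dissipative coefficient at level $n$ is $k_{n+1}=2^{(\alpha+2\tilde\alpha)(n+1)}$ up to the normalization already built into the lifting; that is, the classic $\beta$ becomes $\beta=\alpha+2\tilde\alpha$, while the viscosity exponent $\gamma$ is unchanged and the forcing $f$ is unchanged. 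With this dictionary, $3\gamma\geq2\beta$ becomes $3\gamma\geq2(\alpha+2\tilde\alpha)$, i.e.\ $\alpha-\tilde\alpha\leq\tfrac32\gamma-3\tilde\alpha$, so one must be slightly careful: the correct translation that yields the stated condition $\alpha-\tilde\alpha\leq\tfrac32\gamma$ comes from the energy-balance bookkeeping below rather than a naive substitution, and I would make the dictionary precise by writing the stationarity equations directly in the $x_n:=X_j$, $|j|=n$, variables.

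First I would write the stationary tree equations in generation variables:
\begin{equation}\label{eq:stat_gen}
0=-\nu 2^{\gamma n}x_n + 2^{\alpha n}x_{n-1}^2 - N_* 2^{\alpha(n+1)} x_n x_{n+1},\qquad x_{-1}:=f,
\end{equation}
and note that the $l^2(J)$ norm of a generation-constant family is $\sum_n N_*^n x_n^2=\sum_n 2^{2\tilde\alpha n}x_n^2$. Next, in the inviscid case $\nu=0$, I would solve~\eqref{eq:stat_gen} explicitly: the recursion $2^{\alpha n}x_{n-1}^2=N_*2^{\alpha(n+1)}x_nx_{n+1}$ is satisfied by the ansatz $x_n=f\cdot 2^{-\frac{n+1}{3}(2\tilde\alpha+\alpha)}$ — substitute and check that exponents and constants match — and then verify $\sum_n 2^{2\tilde\alpha n}x_n^2<\infty$, which holds because $2\tilde\alpha-\tfrac23(2\tilde\alpha+\alpha)=-\tfrac23(\alpha-\tilde\alpha)<0$ exactly when $\tilde\alpha<\alpha$, giving the stated formula. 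For the viscous regular case, I would use the energy balance of Proposition~\ref{prop:energy_balance}: at stationarity $f^2 X_0=\nu\sum_j d_j X_j^2$, which forces a definite decay rate; then I would run a bootstrap on the weighted norms $\sum_j 2^{2s|j|}X_j^2$, showing by induction on $s$ that finiteness at level $s$ plus~\eqref{eq:stat_gen} (solved for $x_{n+1}$ in terms of $x_n,x_{n-1}$) propagates to level $s+\tfrac12$ or so, the condition $\alpha-\tilde\alpha\leq\tfrac32\gamma$ being exactly what makes each bootstrap step gain regularity rather than lose it. For the anomalous-dissipation case $\alpha-\tilde\alpha>\tfrac32\gamma$, I would show that for large $f$ the solution must behave like the inviscid one at low generations (viscosity being a lower-order perturbation there because $\gamma<\tfrac23(\alpha+2\tilde\alpha)\cdot\frac{?}{?}$ — more simply, because the inviscid profile already fails to be regular), hence is not regular, and then read off strict inequality in the energy balance.

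Existence and uniqueness I would handle together with a shooting/monotonicity argument on the recursion~\eqref{eq:stat_gen}: viewing it as determining $x_{n+1}=\frac{2^{\alpha n}x_{n-1}^2-\nu 2^{\gamma n}x_n}{N_*2^{\alpha(n+1)}x_n}$ given $(x_{n-1},x_n)$ with $x_{-1}=f$ fixed, the single free parameter is $x_0>0$; I would show the map $x_0\mapsto(\text{resulting sequence})$ is monotone, that too-large $x_0$ forces some $x_n\le 0$ (violating positivity) while too-small $x_0$ forces the tail to grow and leave $l^2$, and that there is a unique threshold value where the sequence stays positive and in $\ell^2$ — this is the standard Chen–Friedlander–Pavlović-type argument, here with the extra weight $2^{2\tilde\alpha n}$ in the norm. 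The main obstacle is the viscous bootstrap: making the induction on $s$ clean requires a uniform control of $x_{n+1}/x_n$ from~\eqref{eq:stat_gen}, and near the borderline $\alpha-\tilde\alpha=\tfrac32\gamma$ the gain per step degenerates, so I expect the delicate point to be choosing the right family of weighted norms (perhaps $\sum_j 2^{2s|j|}X_j^2$ for a carefully spaced sequence of $s$'s, or an exponential-weight norm) so that the argument closes for \emph{all} real $s$ as claimed, rather than only for $s$ below some explicit bound.
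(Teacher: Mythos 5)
Your outline has two genuine gaps. The most serious one concerns uniqueness: Theorem~\ref{thm:main_3} asserts uniqueness among \emph{all} positive $l^2$ stationary solutions on the tree, but you reduce immediately to generation-constant families $X_j=x_{|j|}$ and run a shooting argument on the scalar recursion~\eqref{eq:stat_gen}. Nothing in your argument rules out a stationary positive $l^2$ solution that is \emph{not} constant on generations, and this is precisely where the work lies: the paper proves uniqueness (Theorem~\ref{thm:esi_unic_sol_inv}) by a genuinely tree-level argument, rescaling to $Z_j=2^{(2+|j|)\alpha/3}X_j$ and, given two distinct solutions, following two paths down the tree chosen by alternately maximizing/minimizing over offspring; the stationarity recursion then forces the ratio along these paths to grow like $\lambda^{2^m}$, contradicting the $l^2$ (i.e.\ \eqref{eq:Z_regularity}) bound. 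Your shooting/monotonicity argument, even if completed, only yields uniqueness within the symmetric class, so the theorem as stated is not proved. (Existence, by contrast, is fine to get through the symmetric ansatz, which is exactly the paper's route via the lift of Proposition~\ref{prop:class_to_tree} and the nested-interval construction of Theorem~\ref{thm:esi_sol_sta_dia_vis}; note also that in the viscous case positivity alone already gives $\energy_n\leq\nu^{-1}f^2Y_0$, so the shooting only needs to produce positivity, not an $l^2$ threshold.)

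The second gap is the regular/anomalous dichotomy. Your coefficient dictionary $\beta=\alpha+2\tilde\alpha$ is wrong: with the lift $X_j=2^{-(|j|+2)\tilde\alpha}Y_{|j|}$ of Proposition~\ref{prop:class_to_tree} one gets $\beta=\alpha-\tilde\alpha$ (equivalently, substituting $x_n=2^{-(n+2)\tilde\alpha}y_n$ into your~\eqref{eq:stat_gen} turns the factor $N_*2^{\alpha(n+1)}$ into $2^{(\alpha-\tilde\alpha)(n+1)}$), and then $3\gamma\geq 2\beta$ is exactly $\alpha-\tilde\alpha\leq\frac32\gamma$ — no ``energy-balance bookkeeping'' is needed to repair the translation. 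You noticed the mismatch but left it unresolved, and the replacements you propose are not proofs: the ``bootstrap in $s$'' for the regular case is not specified (the paper instead shows that for $\mu=\gamma-\frac23\beta\geq0$ the variable $Z_n=\nu^{-1}2^{\frac\beta3(n+2)}Y_n$ satisfies $Z_n<Z_{n-1}^2$ and tends to $0$ doubly exponentially, which gives all weighted norms at once and vanishing flux), and the anomalous case cannot be argued from behavior ``at low generations,'' since regularity and anomalous dissipation are statements about the high-generation tail; the correct mechanism is that for $\mu<0$ and $f$ large the sequence $Z_n$ is non-increasing with a strictly positive limit $z$, so $k_{n+1}Y_n^2Y_{n+1}\to 2^{-4\beta/3}\nu^3z^3>0$ and the energy balance is strictly dissipative.
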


\section{Elementary properties}
\label{sec:elem-prop}
We will provide, in this section, some basic results on the tree
dyadic model. The results are analogous to those provided for the
dyadic model in~\cite{MR2746670} and~\cite{MR2796837}, but the proofs
require some new ideas to cope with the more general structure.

We will suppose throughout the paper that the initial condition $X^0$
is in $l^2$ and that $X^0_j\geq0$ for all $j\in J$. It will turn out
that this two properties hold then for all times.

\begin{defi}
For $n\geq-1$, we denote by $\energy_n(t)$ the total energy on nodes
$j$ with $|j|\leq n$ at time $t$ and $\energy(t)$ the energy of all
nodes at time $t$ (which is possibly infinite):
\begin{align*}
\energy_n(t)&:=
\sum_{|j|\leq n}X_j^2(t), &
\energy(t)&:=
\sum_{j\in J}X_j^2(t).
\end{align*}
Note in particular that $\energy_{-1}\equiv0$.
\end{defi}

We will use very often the derivative of $\energy_n$, for $n\geq0$,
\begin{multline*}
\frac d{dt}\energy_n(t)
=2\sum_{|j|\leq n}X_j\frac d{dt}X_j(t)\\
=-2\nu\sum_{|j|\leq n}d_j X_j^2 + 2\sum_{|j|\leq n}c_j X_{\bar\jmath}^2X_j-2\sum_{|j|\leq n}\sum_{k\in\O j}c_{k}X_j^2X_k\\
%=-2\nu\sum_{|j|\leq n}d_j X_j^2 + 2\sum_{0\leq|j|\leq n}c_j X_{\bar\jmath}^2X_j-2\sum_{1\leq|k|\leq n+1}c_{k}X_{\bar k}^2X_k\\
=-2\nu\sum_{|j|\leq n}d_j X_j^2 + 2c_0 X_{\bar0}^2X_0-2\sum_{|k|= n+1}c_{k}X_{\bar k}^2X_k
\end{multline*}
so we get for all $n\geq0$
\begin{equation}\label{eq:energy_derivative}
\frac d{dt}\energy_n(t)
=-2\nu\sum_{|j|\leq n}d_j X_j^2(t) + 2f^2X_0(t)-2\sum_{|k|= n+1}c_{k}X_{\bar k}^2(t)X_k(t).
\end{equation}

\begin{prop}\label{prop:globalboundenergy}
If $X^0_j\geq0$ for all $j$, then any componentwise solution is
positive.  If $X^0$ is in $l^2$, any positive componentwise solution
is a positive $l^2$ solution, in particular for all $t\geq0$,
\begin{equation}\label{eq:globalboundenergy}
\energy(t)\leq(\energy(0)+1)e^{2f^2t}.
\end{equation}
\end{prop}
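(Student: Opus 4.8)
The plan is to split the statement into two parts: positivity, and then the $l^2$ bound~\eqref{eq:globalboundenergy}. For positivity, I would argue componentwise using the structure of equation~\eqref{eq:system_tree_dyadic_general}. Fix $j$ and observe that the $j$-th equation can be rewritten as $\frac{d}{dt}X_j = -\bigl(\nu d_j + \sum_{k\in\O j}c_k X_k\bigr)X_j + c_j X_{\bar\jmath}^2$, i.e.\ a linear ODE in $X_j$ with a coefficient $a_j(t):=\nu d_j + \sum_{k\in\O j}c_k X_k(t)$ and a source term $c_j X_{\bar\jmath}^2(t)\ge 0$. Using the integrating factor $\exp\bigl(\int_0^t a_j\bigr)$ gives $X_j(t) = X_j^0 e^{-\int_0^t a_j} + \int_0^t e^{-\int_s^t a_j}\, c_j X_{\bar\jmath}^2(s)\,ds$, so if all the relevant quantities on the right are nonnegative then $X_j(t)\ge 0$. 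The subtlety is that $a_j$ itself involves the $X_k$ with $|k|=|j|+1$, so this is not an immediate induction on generations. I would instead run a standard argument: let $t^\ast := \sup\{t\ge 0 : X_k(s)\ge 0 \text{ for all } k\in J,\ s\in[0,t]\}$ (using continuity of all components, and that for the root $X_{\bar 0}\equiv f\ge 0$); on $[0,t^\ast]$ the coefficients $a_j$ are finite continuous functions, so the integrating-factor formula applies and shows each $X_j$ stays $\ge 0$, hence $t^\ast$ cannot be finite. One should be slightly careful because $J$ is infinite and $a_j$ is an infinite sum when $N_\ast=\infty$; but here $N_\ast=2^{2\tilde\alpha}<\infty$, so $a_j$ is a finite sum and there is no convergence issue.

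For the $l^2$ bound, the natural tool is the energy identity~\eqref{eq:energy_derivative}. Assuming positivity (just proven) and $f,\nu\ge 0$, the viscous term $-2\nu\sum_{|j|\le n}d_j X_j^2$ is $\le 0$ and the flux term $-2\sum_{|k|=n+1}c_k X_{\bar k}^2 X_k$ is also $\le 0$, so $\frac{d}{dt}\energy_n(t)\le 2f^2 X_0(t)$. To close this into a Gronwall estimate I would bound $X_0(t)$ in terms of $\energy_n(t)$: since $X_0^2\le \energy_n$ we have $X_0\le \sqrt{\energy_n}\le \frac12(1+\energy_n)$, hence $\frac{d}{dt}\energy_n(t)\le f^2(1+\energy_n(t))$, and thus $1+\energy_n(t)\le (1+\energy_n(0))e^{f^2 t}\le (1+\energy(0))e^{f^2 t}$, uniformly in $n$. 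Letting $n\to\infty$ (monotone convergence, as $\energy_n\uparrow\energy$) gives $\energy(t)\le 1+\energy(t)\le (1+\energy(0))e^{f^2 t}$, which is slightly better than~\eqref{eq:globalboundenergy}; the stated form follows a fortiori. In particular $\energy(t)<\infty$ for all $t$, so a positive componentwise solution starting in $l^2$ is a positive $l^2$ solution.

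The main obstacle is the positivity part, specifically making the integrating-factor argument rigorous despite the mutual coupling of the components through the coefficients $a_j$ — one must phrase it as a "first exit time" argument rather than a naive generation-by-generation induction, and check that on the closed interval $[0,t^\ast]$ all the $a_j$ are genuinely finite and continuous so that the ODE comparison is valid. The energy bound, by contrast, is routine once positivity and~\eqref{eq:energy_derivative} are available, the only care being the interchange of limit and the passage $n\to\infty$, which is legitimate by monotonicity of $n\mapsto\energy_n(t)$.
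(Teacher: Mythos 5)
Your proposal is correct and follows essentially the paper's route: the variation-of-constants identity for each component gives positivity, and the energy bound comes from dropping the two nonpositive terms in \eqref{eq:energy_derivative}, bounding $X_0$ by an affine function of $\energy_n$, applying Gronwall uniformly in $n$, and letting $n\to\infty$ by monotonicity (your constant $e^{f^2t}$ is in fact slightly sharper than the stated $e^{2f^2t}$, so \eqref{eq:globalboundenergy} follows a fortiori). The one place you deviate is the positivity step, where the first-exit-time scaffolding you call ``the main obstacle'' is unnecessary: in the Duhamel formula $X_j(t)=X_j^0e^{-\int_0^t a_j}+\int_0^t c_jX_{\bar\jmath}^2(s)\,e^{-\int_s^t a_j}\,ds$ the sign of $a_j(t)=\nu d_j+\sum_{k\in\O j}c_kX_k(t)$ is irrelevant, since the exponential weights are positive whatever sign $a_j$ takes; the only sign information needed is $X_j^0\ge0$ and $c_jX_{\bar\jmath}^2\ge0$, and the latter is automatic because it is a square. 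Hence positivity of every component follows at once from the identity, with no induction over generations and no exit time --- which is exactly the paper's one-line argument via its formula \eqref{eq:2}. This matters beyond elegance: your exit-time sketch, taken literally, would itself need extra care with infinitely many components (one has to rule out $t^\ast=0$ and to propagate nonnegativity strictly beyond $t^\ast$), whereas none of that machinery is needed once you observe that the source term is a square.
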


\begin{proof}
  From the definition of componentwise solution we get that for all $j\in J$
  \begin{equation}\label{eq:2}
   X_j(t)= \ X_j^0
      e^{-\int_0^t (\nu d_j+ \sum_{k} c_kX_k(r))dr}
    +\int_0^t c_jX^2_{\bar{\jmath}}(s)
      e^{-\int_s^t (\nu d_j+ \sum_{k} c_kX_k(r))dr}
      ds
  \end{equation}
yielding $X_j(t)\geq0$ for all $t>0$ and all $j\in J$. 

Now we turn to the estimates of $\energy(t)$.
In~\eqref{eq:energy_derivative}, since $X_k(t)\geq0$ we have two
negative contribution which we drop and we use the bound $X_0(t)\leq
X_0^2(t)+1\leq\energy_n(t)+1$ to get that for all $n\geq0$,
\[
\frac d{dt}\energy_n(t)
\leq 2f^2(\energy_n(t)+1)
\]
so by Gronwall lemma
$\energy_n(t)+1\leq(\energy_n(0)+1)e^{2f^2t}$. Letting $n\to\infty$ we
obtain~\eqref{eq:globalboundenergy}.
\end{proof}

\begin{prop}\label{prop:energy_balance}
For any positive $l^2$ solution $X$, the following energy balance
principle holds, for all $0\leq s<t$.
\begin{multline}\label{eq:conservativity}
\energy(t)
=\energy(s)+2f^2\int_s^tX_0(u)du-2\nu\sum_{j\in J}d_j\int_s^tX_j^2(u)du\\
-2\lim_{n\to\infty}\int_s^t\sum_{|k|=n}c_kX_{\bar k}^2(u)X_k(u)du
\end{multline}
where the limit always exists and is non-negative.
In particular, for the unforced, inviscid ($f=\nu=0$) tree dyadic
model, $\energy$ is non-increasing.
\end{prop}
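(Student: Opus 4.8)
\section*{Proof proposal}

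The plan is to start from the exact identity \eqref{eq:energy_derivative} for $\tfrac{d}{dt}\energy_n$, integrate it over a time interval $[s,t]$, and then let $n\to\infty$, exploiting that $X$ is a positive $l^2$ solution, so that $X(u)\in l^2$ for every $u$ and $X_j(u)\geq 0$ for all $j$. Integrating \eqref{eq:energy_derivative} — which is legitimate since $\energy_n$ is a finite sum of products of $\mathcal C^1$ functions, hence $\mathcal C^1$ — I would record, for each fixed $n\geq0$, the finite-level balance
\[
\energy_n(t)+2\nu\sum_{|j|\leq n}d_j\int_s^t X_j^2\,du+2\int_s^t\sum_{|k|=n+1}c_kX_{\bar k}^2X_k\,du=\energy_n(s)+2f^2\int_s^t X_0\,du,
\]
noting that $\int_s^t X_0\,du$ is finite by continuity of $X_0$ and non-negative because the solution is positive, and that every integrand appearing above is non-negative for the same reason.

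The key observation is that the right-hand side is finite and independent of $n$, since $\energy_n(s)\leq\energy(s)=\|X(s)\|^2<\infty$. As the three summands on the left are all $\geq0$, each of them is bounded, uniformly in $n$, by $\energy(s)+2f^2\int_s^t X_0\,du$. In particular the partial sums $\sum_{|j|\leq n}d_j\int_s^t X_j^2\,du$ are non-decreasing in $n$ and bounded, hence converge to the finite quantity $\sum_{j\in J}d_j\int_s^t X_j^2\,du$; and by monotone convergence $\energy_n(s)\uparrow\energy(s)$ and $\energy_n(t)\uparrow\energy(t)$, both limits finite.

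The final step is to solve the finite-level balance for the flux integral and pass to the limit: all the other terms now converge, so $\int_s^t\sum_{|k|=n+1}c_kX_{\bar k}^2X_k\,du$ converges as $n\to\infty$, its limit is non-negative because each term is, and after relabelling $n+1\mapsto n$ the resulting identity is exactly \eqref{eq:conservativity}. Specializing to $f=\nu=0$ leaves $\energy(t)=\energy(s)-2\lim_n\int_s^t\sum_{|k|=n}c_kX_{\bar k}^2X_k\,du\leq\energy(s)$, which is the asserted monotonicity.

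The one point requiring a little care is precisely the existence of the flux limit: the flux $\sum_{|k|=n}c_kX_{\bar k}^2X_k$ is not monotone in $n$, so its convergence cannot be obtained directly from a sign argument — it has to be extracted from the finite-level identity once one has first pinned down the finiteness and convergence of all the other terms, which in turn relies on $X(u)$ remaining in $l^2$ at every time. Everything else is routine manipulation of non-negative series.
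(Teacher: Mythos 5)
Your proposal is correct and follows essentially the same route as the paper's proof: integrate the finite-level identity \eqref{eq:energy_derivative} over $[s,t]$, use that $\energy_n(s)\uparrow\energy(s)$ and $\energy_n(t)\uparrow\energy(t)$ are finite since $X$ is an $l^2$ solution, bound the monotone viscosity term by $\energy(s)+2f^2\int_s^tX_0\,du$ so it converges, and then obtain convergence and non-negativity of the flux term from the identity itself. Your closing remark that the flux limit cannot come from monotonicity but must be extracted from the balance is exactly the point the paper's argument relies on.
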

\begin{proof}
Let $0\leq s<t$, then by~\eqref{eq:energy_derivative} for all $n\geq0$,
\begin{multline*}
\energy_n(t)
=\energy_n(s)-2\nu\sum_{|j|\leq n}d_j\int_s^tX_j^2(u)du + 2f^2\int_s^tX_0(u)du\\
-2\int_s^t\sum_{|k|=n+1}c_kX_{\bar k}^2(u)X_k(u)du
\end{multline*}
As $n\to\infty$, since the solution is in $l^2$,
$\energy_n(s)\uparrow\energy(s)<\infty$ and the same holds for
$t$. The viscosity term is a non-decreasing sequence bounded by
\[
2\nu\sum_{|j|\leq n}d_j\int_s^tX_j^2(u)du
\leq\energy(s) + 2f^2\int_s^tX_0(u)du<\infty
\]
so it converges too. Then the border term converges being the sum of
converging sequences.
\end{proof}

\begin{defi}\label{def:conservative}
We say that a positive $l^2$ solution $X$ is conservative in $[s,t]$
if the limit in~\eqref{eq:conservativity} is equal to zero that is if
\[
\energy(t)
=\energy(s)+2f^2\int_s^tX_0(u)du-2\nu\sum_{j\in J}d_j\int_s^tX_j^2(u)du
\]
Otherwise we say that $X$ has anomalous dissipation in $[s,t]$.
\end{defi}

\begin{thm}
Let $X^0\in l^2$ with $X_j^0\geq0$ for all $j\in J$. Then there exists
at least a positive $l^2$ solution with initial condition $X^0$.
\end{thm}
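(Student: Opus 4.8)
The plan is a Galerkin-type truncation followed by a compactness argument, in the spirit of the analogous existence results for the classic dyadic model in \cite{MR2746670} and \cite{MR2796837}.

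\textbf{Truncation and a priori bounds.} For each $n\in\N$ I would consider the finite system obtained from~\eqref{eq:system_tree_dyadic_general} by retaining only the unknowns $X_j$ with $|j|\le n$ and setting $X_k\equiv0$ whenever $|k|=n+1$; this simply removes, for the top generation, the outgoing flux $\sum_{k\in\O j}c_kX_jX_k$. Since $\{j\in J:|j|\le n\}$ is finite and the right-hand side is polynomial, Picard--Lindel\"of gives a unique local solution $X^{(n)}$ with the prescribed initial datum. Exactly as in Proposition~\ref{prop:globalboundenergy}, the Duhamel representation~\eqref{eq:2} (with the $k$-sum restricted to $\O j$, and absent for $|j|=n$) exhibits each $X^{(n)}_j$ as a sum of manifestly non-negative terms, so $X^{(n)}_j(t)\ge0$; and the energy computation leading to~\eqref{eq:energy_derivative}, now with vanishing boundary flux, gives $\frac d{dt}\energy^{(n)}(t)\le 2f^2(\energy^{(n)}(t)+1)$ for $\energy^{(n)}(t):=\sum_{|j|\le n}(X^{(n)}_j(t))^2$, hence by Gronwall $\energy^{(n)}(t)\le(\energy(0)+1)e^{2f^2t}=:M(t)$ for all $t\ge0$, with $M$ independent of $n$. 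In particular each component stays bounded on compacts, so the local solution is in fact global.

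\textbf{Compactness and passage to the limit.} Fix $j\in J$ and $T>0$. On $[0,T]$ the bound $M(T)$ controls $(X^{(n)}_{\bar\jmath})^2$ and $X^{(n)}_j$, and, via Cauchy--Schwarz over the finite set $\O j$, also the sum $\sum_{k\in\O j}c_kX^{(n)}_jX^{(n)}_k$; hence $\sup_n\sup_{t\in[0,T]}\bigl|\tfrac d{dt}X^{(n)}_j(t)\bigr|<\infty$, so the family $(X^{(n)}_j)_{n\ge|j|}$ is equibounded and equi-Lipschitz on $[0,T]$. By Arzel\`a--Ascoli and a diagonal extraction over the countable index set $J$ and over $T\in\N$, some subsequence satisfies $X^{(n_m)}_j\to X_j$ uniformly on compacts, for every $j\in J$. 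Passing to the limit in the integrated equations $X^{(n_m)}_j(t)=X_j^0+\int_0^t[\,\cdots\,]\,ds$ --- legitimate since for $m$ large ($n_m\ge|j|+1$) the integrand involves only the finitely many uniformly convergent, uniformly bounded functions $X^{(n_m)}_{\bar\jmath}$, $X^{(n_m)}_j$, $(X^{(n_m)}_k)_{k\in\O j}$ --- shows that $X_j$ solves the corresponding integral equation, hence is continuously differentiable and satisfies the $j$-th equation of~\eqref{eq:system_tree_dyadic_general}. Non-negativity passes to the limit, and Fatou gives $\|X(t)\|^2\le\liminf_m\energy^{(n_m)}(t)\le M(t)<\infty$, so $X$ is a positive $l^2$ solution with initial condition $X^0$.

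\textbf{Main obstacle.} Nothing here is deep; the point requiring attention is that the energy estimate of the first step be genuinely uniform in the truncation level $n$, which in turn relies on choosing the truncation so that the boundary flux term in~\eqref{eq:energy_derivative} is exactly zero rather than of an uncontrolled sign --- so that both the equicontinuity of the third step and the $l^2$ control via Fatou survive the limit. I expect this bookkeeping, rather than any analytic difficulty, to be the crux.
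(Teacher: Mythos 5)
Your proposal is correct and follows essentially the same route as the paper: the same Galerkin truncation (zeroing generation $n+1$), the same positivity and Gronwall energy bounds uniform in the truncation level, Arzel\`a--Ascoli with a diagonal extraction, and passage to the limit in the integrated equations. The only cosmetic difference is that you conclude the $l^2$ bound by Fatou on the truncated energies, while the paper invokes Proposition~\ref{prop:globalboundenergy} for the limit solution; both are fine.
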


\begin{proof}
The proof by finite dimensional approximates is completely classic.
Fix $N\geq1$ and consider the finite dimensional system
\begin{equation}\label{eq:galerkin}
\begin{cases}
X_{\bar0}(t)\equiv f &\\
\frac d{dt}X_j=-\nu d_j X_j +c_j
      X_{\bar{\jmath}}^2-\sum_{k\in\O j}c_kX_jX_k
     & j\in J,\ 0\leq |j|\leq N\\
X_k(t)\equiv 0 & k\in J,\ |k|=N+1\\
X_j(0)=X^0_j &j\in J,\ 0\leq|j|\leq N
\end{cases},
\end{equation}
for all $t\geq0$. Notice that proposition~\ref{prop:globalboundenergy}
is true also for this truncated system (with unchanged proof), so
there is a unique global solution. (Local existence and uniqueness
follow from the local Lipschitz continuity of the vector field and
global existence comes from the bound
in~\eqref{eq:globalboundenergy}.)  We'll denote such unique solution
by $X^N$.

Now fix $j\in J$ and consider on a bounded interval $\left[0,T\right]$
the family $(X_j^N)_{N>|j|}$. By~\eqref{eq:globalboundenergy} we have
a strong bound that does not depend on $t$ and $N$
\begin{equation*}
   |X_j^N(t)|  \leq 
    (\energy(0)+1)^{\frac{1}{2}}
    e^{\frac{1}{2}Tf^2} \qquad \forall N\geq1\ \ \forall t\in [0,T]\ 
,
\end{equation*}
thus the family $(X_j^N)_{N>|j|}$ is uniformly bounded, and by
applying the same bound to \eqref{eq:galerkin}, equicontinuous.  From
Arzel\`a-Ascoli theorem, for every $j\in J$ there exists a sequence
$(N_{j,k})_{k\geq1}$ such that $(X_j^{N_{j,k}})_k$ converges uniformly
to a continuous function $X_j$. By a diagonal procedure we can modify
the extraction procedure and get a single sequence $(N_k)_{k\geq1}$
such that for all $j\in J$, $X_j^{N_k}\to X_j$ uniformly. Now
we can pass to the limit as $k\to\infty$ in the equation
\[
X_j^{N_k}=X_j^0+\int_0^t\Bigl[
-\nu d_jX_j^{N_k}(r)+
c_j\left(X^{N_k}_{\bar
      \jmath}(r)\right)^2-\sum_{i\in\O j}c_iX_j^{N_k} (r)X_i^{N_k} (r)\Bigr]dr
\]
and prove that the functions $X_j$ are continuously differentiable and
satisfy system~\eqref{eq:system_tree_dyadic_general} with initial
condition $X^0_j$. Continuation from an arbitrary bounded time interval
to all $t\geq0$ is obvious.  Finally, $X$ is a positive $l^2$ solution
by Proposition~\ref{prop:globalboundenergy}.
\end{proof}

We conclude the section on elementary results by collecting a useful
estimate on the energy transfer and a statement clarifying that all
components are strictly positive for $t>0$.

\begin{prop}\label{prop:energy_inequality}
  The following properties hold:
\begin{enumerate}
\item If $f=0$, for all $n\geq-1$ 
\begin{equation}\label{eq:flux_inequality}
  2\int_0^{+\infty} \sum_{|k|=n+1} c_k X_{\bar k}^2(s)X_k(s) ds
\leq \energy_n(0)
 \end{equation}
\item if $X_j^0>0$ for all $j$ s.t.~$|j|=M$ for some $M\geq0$, then
  $X_j(t)>0$ for every $j$ s.t.~$|j|\geq M$ and all $t>0$.
\end{enumerate}
\end{prop}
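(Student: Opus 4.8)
For part (1), the plan is to integrate the energy identity~\eqref{eq:energy_derivative} in time. With $f=0$, equation~\eqref{eq:energy_derivative} reads
\[
\frac{d}{dt}\energy_n(t) = -2\nu\sum_{|j|\leq n} d_j X_j^2(t) - 2\sum_{|k|=n+1} c_k X_{\bar k}^2(t) X_k(t),
\]
so both terms on the right are non-positive (using positivity of the solution, Proposition~\ref{prop:globalboundenergy}). Hence $\energy_n$ is non-increasing, and integrating from $0$ to $T$ gives
\[
2\int_0^T \sum_{|k|=n+1} c_k X_{\bar k}^2(s) X_k(s)\,ds \leq \energy_n(0) - \energy_n(T) \leq \energy_n(0).
\]
Since the integrand is non-negative, the integral over $[0,T]$ is monotone in $T$ and bounded, so letting $T\to\infty$ yields~\eqref{eq:flux_inequality}. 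The case $n=-1$ is trivial since $\energy_{-1}\equiv 0$ forces the flux term $\sum_{|k|=0} c_k X_{\bar k}^2 X_k = c_0 f^2 X_0$ to vanish, consistent with $f=0$.

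For part (2), the plan is to use the integral representation~\eqref{eq:2} from the proof of Proposition~\ref{prop:globalboundenergy}. I would argue by induction on the generation $|j|\geq M$. First, for $|j|=M$: the hypothesis gives $X_j^0 > 0$, and since all exponential factors in~\eqref{eq:2} are strictly positive, the first term $X_j^0 e^{-\int_0^t(\nu d_j + \sum_k c_k X_k(r))dr}$ is strictly positive for every $t>0$ (the exponent is finite because $X(t)\in l^2$ and the solution is continuous, so $\sum_k c_k X_k(r)$, being a finite sum over offspring, is bounded on $[0,t]$). For the inductive step, suppose $X_i(t)>0$ for all $t>0$ whenever $|i|=m\geq M$, and take $j$ with $|j|=m+1$, so $|\bar\jmath|=m$. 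Then $X_{\bar\jmath}(s)>0$ for all $s>0$ by the inductive hypothesis, so $c_j X_{\bar\jmath}^2(s)>0$ for $s\in(0,t)$; plugging into the second term of~\eqref{eq:2}, the integrand is strictly positive on $(0,t)$ (again the inner exponential factor is strictly positive), so the integral is strictly positive, whence $X_j(t)>0$.

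The only genuine technical point, and the place to be slightly careful, is justifying that the exponents appearing in~\eqref{eq:2} are finite, i.e.\ that $\int_0^t \sum_{k\in\O j} c_k X_k(r)\,dr < \infty$ for each fixed $j$ and $t$: this follows because $\O j$ is a finite set (of cardinality $N_*$), each $X_k$ is continuous hence bounded on $[0,t]$, and $c_k$ is a fixed constant. With that in hand everything reduces to the elementary observation that $e^{-(\text{finite})}>0$ and that a non-negative continuous function which is strictly positive on an interval has strictly positive integral. No serious obstacle arises; the argument is entirely a matter of reading off signs from the variation-of-constants formula~\eqref{eq:2} and the energy identity~\eqref{eq:energy_derivative}.
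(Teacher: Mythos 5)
Your proposal is correct and follows essentially the same route as the paper: part (1) by integrating the energy identity~\eqref{eq:energy_derivative} with $f=0$ and dropping the non-negative viscosity and $\energy_n(T)$ terms before letting $T\to\infty$, and part (2) by induction on the generation using the variation-of-constants formula~\eqref{eq:2}. The extra remark on the finiteness of the exponents (finite offspring set, continuity of each $X_k$) is a sound, if minor, addition that the paper leaves implicit.
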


\begin{proof}
  \begin{enumerate}
  \item If $n=-1$ the inequality is trivially true. If $n\geq0$, by
    integrating equation~\eqref{eq:energy_derivative} with $f=0$, we
    find that
\[
\energy_n(t)+2\nu\int_0^t\sum_{|j|\leq n}d_jX_j^2(s)ds=\energy_n(0) -2\int_0^t\sum_{|k|= n+1}c_{k}X_{\bar k}^2(s)X_k(s)ds
\]
The left hand side is non-negative for all $t$, so taking the limit
for $t\to \infty$ in the right hand side completes the proof.
\item For $|j|=M$ we have from \eqref{eq:2}
  \begin{equation*}
    \label{eq:10}
    X_j(t)\geq X_j^0
    e^{-\int_0^t (\nu d_j+ \sum_{k} c_kX_k(r))dr}>0
  \end{equation*}
  Now suppose that for some $j\in J\setminus\{0\}$,
  $X_{\bar\jmath}(t)>0$ for every $t>0$. Then again by~\eqref{eq:2},
  \begin{equation*}
    \label{eq:11}
    X_j(t)
    \geq\int_0^t c_jX^2_{\bar\jmath}(s)
    e^{-\int_s^t (\nu d_j+ \sum_{k\in\O j} c_kX_k(r))dr}ds>0
  \end{equation*}
  By induction on $|j|\geq M$ we have our thesis.\qedhere
  \end{enumerate}
  
\end{proof}

\section{Relationship with classic dyadic model}
\label{sec:dyadic-model-tree-vs-classical}
Recall the differential equations for the tree and classic dyadic
models.
\begin{align}\label{eq:system_albero}
&\left\{\begin{aligned}
X_{\bar0}(t)&\equiv f \\
\frac d{dt} X_j 
&= -\nu d_j X_j + c_j X_{\bar\jmath}^2-\sum_{k\in\O j}c_{k}X_jX_k,
      &\forall j&\in J
\end{aligned}\right.\\
\label{eq:system_diadico}
&\left\{\begin{aligned}
Y_{-1}(t) &\equiv f\\
\frac{d}{dt} Y_n &= -\nu l_n Y_n +
      k_n Y_{n-1}^2-k_{n+1}Y_nY_{n+1},
      &\forall n&\geq0
\end{aligned}\right.
\end{align}
where $f\geq0$, $\nu\geq0$ and for all $n\in\N$ and $j\in J$,
\begin{equation*}
\begin{aligned}
c_j&=2^{\alpha|j|}, &  
k_n&=2^{\beta n}, &
d_j&=2^{\gamma |j|}, &
l_n&=2^{\gamma n}.
\end{aligned}
\end{equation*}
Again we assume that $\sharp\O j=N_*=2^{2\tilde\alpha}$ for all $j\in
J$, but we stress that for this section this is a fundamental
hypothesis and not a technical one.

The following proposition shows that examples of solutions of the
tree dyadic model~\eqref{eq:system_albero} can be obtained by lifting
the solutions of the classic dyadic model~\eqref{eq:system_diadico}.
\begin{prop}\label{prop:class_to_tree}
If $Y$ is a componentwise (resp.\ $l^2$) solution
of~\eqref{eq:system_diadico}, then
$X_j(t):=2^{-(|j|+2)\tilde\alpha}Y_{|j|}(t)$ is a componentwise
(resp.\ $l^2$) solution of~\eqref{eq:system_albero} with
$\alpha=\beta+\tilde\alpha$. If $Y$ is positive, so is $X$.
\end{prop}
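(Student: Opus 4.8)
The plan is to verify directly, by substitution, that the ansatz $X_j(t):=2^{-(|j|+2)\tilde\alpha}Y_{|j|}(t)$ turns the tree equations into the classic ones. First I would record the relevant exponential identities: with $c_j=2^{\alpha|j|}$ and the prescription $\alpha=\beta+\tilde\alpha$, the parent coefficient is $c_j=2^{(\beta+\tilde\alpha)|j|}$, while the classic coefficient at the same level is $k_{|j|}=2^{\beta|j|}$; and the viscosity coefficients satisfy $d_j=2^{\gamma|j|}=l_{|j|}$. One also needs the elementary fact that all offspring $k\in\O j$ have $|k|=|j|+1$, and that $X_k=2^{-(|j|+3)\tilde\alpha}Y_{|j|+1}$ is the \emph{same} value for each of the $N_*=2^{2\tilde\alpha}$ offspring, so that $\sum_{k\in\O j}$ contributes a factor $N_*=2^{2\tilde\alpha}$.

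Next I would simply plug the ansatz into the right-hand side of \eqref{eq:system_albero} and collect powers of $2$. For $n:=|j|$ the left-hand side is $\frac{d}{dt}X_j=2^{-(n+2)\tilde\alpha}\dot Y_n$. On the right, the viscous term is $-\nu d_j X_j=-\nu l_n\,2^{-(n+2)\tilde\alpha}Y_n$; the inflow term is $c_jX_{\bar\jmath}^2=2^{(\beta+\tilde\alpha)n}\bigl(2^{-(n+1)\tilde\alpha}Y_{n-1}\bigr)^2=2^{(\beta+\tilde\alpha)n-2(n+1)\tilde\alpha}Y_{n-1}^2=2^{\beta n-(n+2)\tilde\alpha}Y_{n-1}^2=2^{-(n+2)\tilde\alpha}k_nY_{n-1}^2$; and the outflow term is $\sum_{k\in\O j}c_kX_jX_k=N_*\cdot 2^{(\beta+\tilde\alpha)(n+1)}\cdot 2^{-(n+2)\tilde\alpha}Y_n\cdot 2^{-(n+3)\tilde\alpha}Y_{n+1}$, whose exponent is $2\tilde\alpha+(\beta+\tilde\alpha)(n+1)-(n+2)\tilde\alpha-(n+3)\tilde\alpha=\beta(n+1)-(n+2)\tilde\alpha$, so this term equals $2^{-(n+2)\tilde\alpha}k_{n+1}Y_nY_{n+1}$. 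Dividing through by the common factor $2^{-(n+2)\tilde\alpha}$ yields exactly $\dot Y_n=-\nu l_nY_n+k_nY_{n-1}^2-k_{n+1}Y_nY_{n+1}$, i.e.\ \eqref{eq:system_diadico}. I would also check the boundary convention: $X_{\bar 0}\equiv f$ must match, and indeed for $j=0$ the inflow term should read $c_0X_{\bar 0}^2=f^2$ on both sides, consistent with $Y_{-1}\equiv f$ once one notes the ansatz is only imposed for genuine nodes and $X_{\bar 0}$ is by definition the alias $f=2^{-\tilde\alpha}\cdot 2^{\tilde\alpha}f$; a clean way is to just observe the $n=0$ instance of the computation above uses $Y_{-1}=f$ and reproduces $c_0X_{\bar0}^2=2^{-2\tilde\alpha}k_0 f^2$ — so one should be slightly careful and instead define $X_{\bar 0}$ consistently, or simply treat $n=0$ separately using $c_0=1$, $k_0=1$ and $X_{\bar0}=f=Y_{-1}$.

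Finally, the $l^2$ and positivity claims are immediate bookkeeping: positivity of $X$ follows since $2^{-(|j|+2)\tilde\alpha}>0$ and $Y_{|j|}\geq0$; and for the norm, $\sum_{j\in J}X_j^2=\sum_{n\geq0}\bigl(\sharp\{j:|j|=n\}\bigr)2^{-2(n+2)\tilde\alpha}Y_n^2=\sum_{n\geq0}2^{2\tilde\alpha n}\cdot 2^{-2(n+2)\tilde\alpha}Y_n^2=2^{-4\tilde\alpha}\sum_{n\geq0}Y_n^2$, using $\sharp\{|j|=n\}=N_*^{\,n}=2^{2\tilde\alpha n}$, so $\|X\|^2=2^{-4\tilde\alpha}\|Y\|^2$ and one side is finite iff the other is. Continuous differentiability of $X_j$ is inherited from that of $Y_{|j|}$. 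I do not expect a genuine obstacle here — the proposition is a change-of-variables identity — the only thing requiring attention is the correct handling of the root's forcing alias $X_{\bar 0}$ so that the $n=0$ equation matches without an extra factor of $2^{\tilde\alpha}$; the self-consistent reading is that the ansatz, extended formally to ``level $-1$'' would give $2^{-\tilde\alpha}Y_{-1}$, which is \emph{not} $f$, so the match at $n=0$ must be argued from $c_0=k_0=1$ and the defining conventions $X_{\bar0}\equiv f\equiv Y_{-1}$ directly rather than from the ansatz.
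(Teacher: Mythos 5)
Your computation for the interior nodes ($|j|\geq 1$), the positivity remark, and the norm identity $\|X\|^2=2^{-4\tilde\alpha}\|Y\|^2$ (generation $n$ carries $N_*^n=2^{2\tilde\alpha n}$ identical values) are exactly the ``direct computation'' that the paper's proof leaves implicit, so the bulk of your argument coincides with the paper's.

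The gap is at the root, and your final sentence resolves it the wrong way. With the conventions $X_{\bar0}\equiv f\equiv Y_{-1}$ and $c_0=k_0=1$, the $j=0$ equation does \emph{not} match: substituting $X_0=2^{-2\tilde\alpha}Y_0$ and $X_k=2^{-3\tilde\alpha}Y_1$ for $k\in\O{0}$, the right-hand side of \eqref{eq:system_albero} is $-\nu X_0+f^2-2^{-2\tilde\alpha}k_1Y_0Y_1$, while $\dot X_0=2^{-2\tilde\alpha}\dot Y_0=-\nu X_0+2^{-2\tilde\alpha}f^2-2^{-2\tilde\alpha}k_1Y_0Y_1$; the two differ by $(1-2^{-2\tilde\alpha})f^2$, so the claimed identity fails whenever $f>0$ and $\tilde\alpha>0$. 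The observation you tried to argue away --- that the ansatz extended formally to level $-1$ gives $2^{-\tilde\alpha}Y_{-1}=2^{-\tilde\alpha}f$, not $f$ --- is in fact the correct resolution: the lifted family solves the tree system whose forcing is $2^{-\tilde\alpha}f$ (equivalently, to obtain a tree solution with forcing $f$ one lifts a classic solution with forcing $2^{\tilde\alpha}f$). For $f=0$, the only case used in Section~\ref{sec:anom-diss}, there is nothing to fix; for $f>0$ the proposition must be read with this rescaling of the forcing, which is also what makes it consistent with Proposition~\ref{prop:sol_sta}: the explicit tree stationary solution with forcing $f$ is the lift of the classic stationary solution with forcing $2^{\tilde\alpha}f$, not with forcing $f$. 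The paper's one-line proof glosses over the same point, but your write-up as it stands ends with a step that a direct check shows to be false, so you should either restrict to $f=0$ or state and use the forcing rescaling explicitly.
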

\begin{proof}
A direct computation shows that $X$ is a componentwise solution. Then
observe that, for any $n\geq0$,
\[
\sum_{|j|=n}X_j^2
=2^{2\tilde\alpha n}X_j^2
=2^{2\tilde\alpha n}2^{-(2n+4)\tilde\alpha}Y_{n}^2
=2^{4\tilde\alpha}Y_n^2
\]
so
\[
\energy_n
=\sum_{|j|\leq n}X_j^2
=\sum_{k\leq n}2^{4\tilde\alpha}Y_k^2
\leq2^{4\tilde\alpha}\|Y\|^2
\]
Positivity is obvious.
\end{proof}

\begin{rmk}
If we consider $\alpha$ fixed, since $\beta=\alpha-\tilde\alpha$, for
small values of $N_*$ we'll have larger values of $\beta$, and the
other way around. That is to say, the less offspring every node has,
the faster the dynamics will be.
\end{rmk}

% \textbf{Riflessione 2.} Notiamo che  $\beta>0$ se $N_*<2^{2\alpha}$. Poich\'e il comportamento
% delle soluzioni del sistema (\ref{eq:system_diadico}) dipendono fortemente dal segno di $beta$ 
% allora anche il comportamento delle soluzioni di (\ref{eq:system_albero}) dipende fortemente 
% dal segno di $\alpha-\tilde{\alpha}$.

\begin{rmk}
Let us stress that $\beta >0$ when $N_*<2^{2\alpha}$. Since the
behavior of the solutions of \eqref{eq:system_diadico} is strongly
related to the sign of $\beta$, then the behavior of the solutions of
\eqref{eq:system_albero} is strongly connected to the sign of $\alpha
-\tilde\alpha$. For example, in the classic dyadic there is anomalous
dissipation if and only if $\beta>0$, and hence in the tree dyadic
there will be lifted solutions with anomalous dissipation when
$\alpha>\tilde\alpha$ and lifted solutions which are conservative when
$\alpha\leq\tilde\alpha$.
\end{rmk}

\section{Anomalous dissipation and self-similar solutions\\
in the inviscid and unforced case.}
\label{sec:anom-diss}
Throughout this section we'll consider
system~\eqref{eq:system_tree_dyadic_general} in its unforced ($f=0$)
and inviscid ($\nu=0$) version.

\begin{equation}\label{eq:System_inviscid_unforced}
\left\{\begin{aligned}
X_{\bar0}(t)&\equiv 0 \\
\frac d{dt} X_j 
&= c_j X_{\bar\jmath}^2-\sum_{k\in\O j}c_{k}X_jX_k,
      &\forall j&\in J
\end{aligned}\right.
\end{equation}

Equation~\eqref{eq:energy_derivative}, that is the derivative of
energy up to the $n$-th generation becomes
\[
\frac d{dt}\energy_n(t)=-2\sum_{|k|= n+1}c_{k}X_{\bar k}^2(t)X_k(t),\qquad n\geq0
\]
Since only the border term survives, one would expect it to vanish in
the limit $n\to\infty$.  This can be rigorously proven only if the
solution lives in a sufficiently regular space, that is to say that
$X_j^2$ goes fast to zero as $|j|\to\infty$. For the classic dyadic
Kiselev and Zlato\v{s}~\cite{KisZla} proved that solutions that are
regular in the beginning, stay regular for some time but then lose
regularity in finite time. Thus our analysis is not restricted to
regular solutions, and in fact we will prove in this section that for
sufficiently large times all solutions dissipate energy.

Let us give some definitions. Let us denote by $\gamma_j$ the energy
at time 0 in the subtree $T_j$ rooted in $j$ plus all the energy
flowing in $j$ from the upper generations,
\[
\gamma_j \coloneqq \sum_{k\in T_j}X_k^2(0) + \int_0^\infty
2c_jX_jX_{\bar{\jmath}}^2ds
\]
Let $0\leq s <t$ and define for all $j\in J$
\[
m_j\coloneqq \inf_{r\in\left[s,t\right]}X_j\left(r\right)
\]

\begin{lem} \label{lem:stime_sup_inf} 
Let $X$ be a positive $l^2$ solution of system
\eqref{eq:System_inviscid_unforced}. The following inequalities hold
for all $n\geq0$.
\begin{gather*}
%  \sum_{|j|=n}M_j^2\leq \energy_n(s)-\energy_{n-1}(t)
%\leq \energy(0)\\
\energy_n(t)-\energy_{n-1}(s)
\leq\sum_{|j|=n}m_j^2
\leq\energy(0) \\
\sum_{|j|=n}\gamma_j\leq \energy(0)\\
\sum_{k\in T_j}X_k(r)^2\leq\gamma_j,\qquad\forall r\geq0
\end{gather*}
\end{lem}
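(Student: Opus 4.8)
I would prove the three displays essentially independently, starting from the last one, which is what makes $\gamma_j$ meaningful. Fix $j$ and, for $N\geq|j|$, put $e_N(r):=\sum_{k\in T_j,\ |k|\leq N}X_k^2(r)$. Differentiating along~\eqref{eq:System_inviscid_unforced}, every in‑flux term $2c_kX_{\bar k}^2X_k$ with $k\in T_j\setminus\{j\}$ and $|k|\leq N$ cancels the matching out‑flux $-2c_kX_{\bar k}^2X_k$ coming from the derivative of $X_{\bar k}^2$ (here $\bar k\in T_j$, $|\bar k|\leq N$), so only the in‑flux at the root $j$ and the out‑fluxes at level $N$ survive:
\[
\frac{d}{dr}e_N(r)=2c_jX_{\bar\jmath}^2(r)X_j(r)-2\sum_{k\in T_j,\ |k|=N+1} c_kX_{\bar k}^2(r)X_k(r)\ \leq\ 2c_jX_{\bar\jmath}^2(r)X_j(r),
\]
the inequality by positivity. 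Integrating on $[0,r]$ and discarding the non‑negative level‑$N$ tail gives $e_N(r)\leq\sum_{k\in T_j}X_k^2(0)+\int_0^\infty 2c_jX_{\bar\jmath}^2X_j\,ds=\gamma_j$, which is finite because the one‑term case of Proposition~\ref{prop:energy_inequality}(1) (with $n=|j|-1$) bounds $\int_0^\infty 2c_jX_{\bar\jmath}^2X_j\,ds$ by $\energy_{|j|-1}(0)$ (and it is $0$ when $j=0$, since $X_{\bar0}\equiv0$). Letting $N\to\infty$ yields $\sum_{k\in T_j}X_k(r)^2\leq\gamma_j$ for all $r\geq0$.

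For $\sum_{|j|=n}\gamma_j\leq\energy(0)$ I would split $\gamma_j$ into its two summands. The subtrees $\{T_j:|j|=n\}$ partition $\{k\in J:|k|\geq n\}$, so $\sum_{|j|=n}\sum_{k\in T_j}X_k^2(0)=\energy(0)-\energy_{n-1}(0)$. For the transfer terms, by Tonelli $\sum_{|j|=n}\int_0^\infty 2c_jX_{\bar\jmath}^2X_j\,ds=\int_0^\infty 2\sum_{|k|=n}c_kX_{\bar k}^2X_k\,ds\leq\energy_{n-1}(0)$ by Proposition~\ref{prop:energy_inequality}(1) applied with $n-1$ in place of $n$. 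Adding the two bounds gives the claim.

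The first chain is the core. Its right end is immediate: $m_j^2\leq X_j^2(r)$ for every $r\in[s,t]$, hence $\sum_{|j|=n}m_j^2\leq\energy(r)\leq\energy(0)$, using that $\energy$ is non‑increasing (Proposition~\ref{prop:energy_balance}). For the left end, integrating~\eqref{eq:energy_derivative} with $f=\nu=0$ over $[s,t]$ and using $\energy_n(s)=\energy_{n-1}(s)+\sum_{|j|=n}X_j^2(s)$ gives
\[
\energy_n(t)-\energy_{n-1}(s)=\sum_{|j|=n}X_j^2(s)-\int_s^t 2\sum_{|k|=n+1} c_kX_{\bar k}^2X_k\,dr,
\]
so it suffices to show $\sum_{|j|=n}\bigl(X_j^2(s)-m_j^2\bigr)\leq\int_s^t 2\sum_{|k|=n+1}c_kX_{\bar k}^2X_k\,dr$. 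Fix $j$ with $|j|=n$ and let $r_j^\ast\in[s,t]$ realize the minimum $m_j$ of the continuous function $X_j$; reading $\frac{d}{dr}X_j^2$ off~\eqref{eq:System_inviscid_unforced} and dropping the non‑positive in‑flux term,
\[
X_j^2(s)-m_j^2=-\int_s^{r_j^\ast}\frac{d}{dr}X_j^2\,dr=\int_s^{r_j^\ast}\Bigl(-2c_jX_{\bar\jmath}^2X_j+2\sum_{k\in\O j} c_kX_j^2X_k\Bigr)dr\ \leq\ \int_s^t 2\sum_{k\in\O j} c_kX_j^2X_k\,dr.
\]
Summing over $|j|=n$ and using $\sum_{|j|=n}\sum_{k\in\O j}c_kX_j^2X_k=\sum_{|k|=n+1}c_kX_{\bar k}^2X_k$ (each level‑$(n+1)$ node $k$ has a unique parent $\bar k$ at level $n$, and $X_j^2X_k=X_{\bar k}^2X_k$ when $j=\bar k$) closes the argument.

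The main obstacle is the left‑hand inequality of the first display: the right idea is to transport $X_j^2(s)-m_j^2$ through the ODE up to the time $r_j^\ast$ where $X_j$ is smallest, so that the excess over $m_j^2$ is exactly absorbed by the outgoing flux $2\sum_{k\in\O j}c_kX_j^2X_k$, which in turn sums precisely to the energy leaving generation $\leq n$. Everything else — the telescoping for subtree energies, the truncate‑and‑pass‑to‑the‑limit step, and the Tonelli interchanges on the (possibly infinite) tree — is routine given Proposition~\ref{prop:energy_inequality}.
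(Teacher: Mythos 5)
Your proposal is correct and follows essentially the same route as the paper's proof: the minimum-time integration of $\frac{d}{dr}X_j^2$ with the in-flux dropped for the first display, the flux inequality of Proposition~\ref{prop:energy_inequality} for the bound on $\sum_{|j|=n}\gamma_j$, and the truncated subtree energy with the border term discarded for the last display. The only differences are cosmetic rearrangements (e.g.\ bounding $\sum_{|j|=n}(X_j^2(s)-m_j^2)$ by the outgoing flux instead of bounding $m_j^2$ from below), so nothing of substance is added or missing.
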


\begin{proof}
The upper bound is obvious, since
\[
\sum_{|j|=n}m_j^2
\leq\sum_{|j|=n}X_j(s)^2
\leq\energy_n(s)
\leq\energy(0)
\]
where we used Proposition~\ref{prop:energy_balance}. Now let $j\in
J$. From~\eqref{eq:System_inviscid_unforced} we have for the
differential of $X^2_j$
\begin{equation*}
\frac{d}{dt} X_j^2 = 2c_j
      X_{\bar{\jmath}}^2X_j-\sum_{k\in\O j}2c_kX_j^2X_k\ ,
\end{equation*}
Let $r\in[s,t]$ and integrate on $[s,r]$, yielding
\begin{equation*}
 X_j^2 (r)=X_j^2 (s) +\int_s^r 2c_j
      X_{\bar{\jmath}}^2(\tau)X_j(\tau) d\tau -\sum_{k\in\O j} \int_s^r 2c_kX_j^2(\tau)X_k(\tau) d\tau
\end{equation*}
Choosing now $r\in\argmin_{\left[s,t\right]}X_j$, we get
\[
m_j^2
\geq X_j^2(s)-\sum_{k\in\O j}\int_s^t2c_kX_{\bar k}^2(\tau)X_k(\tau)d\tau
\]
By summation over all nodes $j$ with $|j|=n$ we have
\[
\sum_{|j|=n}m_j^2
\geq \sum_{|j|=n}X_j^2(s)-\int_s^t\sum_{|k|=n+1}2c_kX_{\bar k}^2(\tau)X_k(\tau)d\tau.
\]
Finally, we apply for $m=n-1$, $n$ the following integral form
of~\eqref{eq:energy_derivative} to get the first part of the
thesis. (Even if $n=0$ and $m=-1$ this is true, trivially.)
\[
\energy_m(t)-\energy_m(s)
=-\int_s^t\sum_{|j|=m+1}2c_jX_{\bar\jmath}^2(\tau)X_j(\tau)d\tau
\]
We turn to the second part. Sum $\gamma_j$ on every~$j$ with $|j|=n$
to get
\[
\sum_{|j|=n}\gamma_j=\sum_{|k|\geq n}X_k^2\left(0\right)+\int_0^\infty 2\sum_{|j|=n}c_jX^2_{\bar{\jmath}}X_jds,
\]
by~\eqref{eq:flux_inequality} the integral term is bounded above by
$\energy_{n-1}(0)$, so
\[
\sum_{\left|j\right|=n}\gamma_j\leq
\sum_{|k|\geq n} X_k^2(0)
+\energy_{n-1}(0)=
\sum_{k\in J} X_k^2(0)=
 \energy(0).
\]
Finally, the third part. Let $r\geq0$. By computing the time
derivative of $\sum_{\substack{k\in T_j\\|k|\leq n}}X_k^2$ which is
analogous to~\eqref{eq:energy_derivative}, dropping the border term
and integrating on $[0,r]$, we have,
\[
\sum_{\substack{k\in T_j\\|k|\leq n}}X_k(r)^2
\leq\sum_{\substack{k\in T_j\\|k|\leq n}}X_k(0)^2+2\int_0^r2c_jX_jX_{\bar{\jmath}}^2du
\leq\gamma_j
\]
Now, let $n\to\infty$ to conclude.
\end{proof}

The following statement will be used in the proof of
Lemma~\ref{lem:en_dissip}.
\begin{lem}\label{lem:esp_inequality}
 For every $h>0$ and $\lambda>0$ the following inequality holds:
\[
\int_0^h \int_0^s e^{-\lambda(s-r)}dr\ ds\geq \frac{h}{2\lambda}\left(1-e^{-\lambda\frac{h}{2}}\right).
\]
\end{lem}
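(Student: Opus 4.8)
The plan is to bound the double integral from below by throwing away a convenient half of each integration interval, reducing the claim to a one-line estimate. First I would dispatch the inner integral by the substitution $u=s-r$:
\[
\int_0^s e^{-\lambda(s-r)}\,dr=\int_0^s e^{-\lambda u}\,du=\frac{1-e^{-\lambda s}}{\lambda}\ ,
\]
so the left-hand side equals $\int_0^h \lambda^{-1}(1-e^{-\lambda s})\,ds=\tfrac{h}{\lambda}-\tfrac{1-e^{-\lambda h}}{\lambda^2}$. One could in principle finish by comparing this closed form with the right-hand side directly, but it is cleaner to avoid manipulating that expression and instead argue by restriction.

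Restrict the outer integration to $s\in[h/2,h]$. Since the integrand $\int_0^s e^{-\lambda(s-r)}\,dr$ is non-negative (in fact equal to the non-decreasing function $\lambda^{-1}(1-e^{-\lambda s})$), discarding the range $s\in[0,h/2]$ only decreases the value. Now for $s\in[h/2,h]$ we have $s-h/2\geq 0$, hence $[s-h/2,s]\subseteq[0,s]$, and since $e^{-\lambda(s-r)}\geq 0$ we may shrink the inner range as well:
\[
\int_0^s e^{-\lambda(s-r)}\,dr\;\geq\;\int_{s-h/2}^{s} e^{-\lambda(s-r)}\,dr\;=\;\int_0^{h/2}e^{-\lambda u}\,du\;=\;\frac{1-e^{-\lambda h/2}}{\lambda}\ ,
\]
where the middle equality is again the substitution $u=s-r$. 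Crucially, this lower bound is a constant, independent of $s$.

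Combining the two steps,
\[
\int_0^h \int_0^s e^{-\lambda(s-r)}\,dr\,ds\;\geq\;\int_{h/2}^{h}\frac{1-e^{-\lambda h/2}}{\lambda}\,ds\;=\;\frac{h}{2}\cdot\frac{1-e^{-\lambda h/2}}{\lambda}\ ,
\]
which is exactly the asserted inequality. There is no genuine obstacle in this lemma: the only idea is to sacrifice, in each variable, the half of the interval nearest the point where the exponential kernel is largest — the outer half near $s=0$ and the inner half near $r=s$ — which replaces a slightly awkward closed-form comparison by the transparent constant bound above.
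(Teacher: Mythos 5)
Your proof is correct and is essentially identical to the paper's: both restrict the outer integral to $[h/2,h]$ and the inner integral to $[s-h/2,s]$, so the inner integral becomes the constant $\lambda^{-1}\bigl(1-e^{-\lambda h/2}\bigr)$ and multiplying by the length $h/2$ gives the bound. (Only your closing heuristic is slightly misstated --- you in fact discard the portions where the integrand is \emph{smallest}, keeping the inner range nearest $r=s$ where the kernel is largest --- but this does not affect the argument.)
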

\begin{proof}
\[
\int_0^h \int_0^s e^{-\lambda(s-r)}dr\ ds\geq
\int_{\frac{h}{2}}^h \int_{s-\frac{h}{2}}^s e^{-\lambda(s-r)}dr\ ds=
\frac{h}{2\lambda}\left(1-e^{-\lambda\frac{h}{2}}\right).\qedhere
\]
\end{proof}

\begin{lem}
\label{lem:en_dissip}
  Assume that $\alpha>\tilde\alpha$, where $2^{2\tilde\alpha}=N_*=
  \sharp\O j$ is the constant number of children for every node. Let
  $X$ be a positive $l^2$ solution of
  \eqref{eq:System_inviscid_unforced}.  Let $(\delta_n)_{n\geq0}$ be a
  sequence of positive numbers such that $\sum_n\delta_n$ and
  $\sum_n\delta_n^{-2}2^{-(\alpha-\tilde\alpha) n}$ are both
  finite. Then there exists a sequence of positive numbers
  $(h_n)_{n\geq0}$ such that $\sum_nh_n<\infty$ and for all $n\geq0$ for
  all $t>0$
\begin{equation}
  \label{eq:tesi_lemma1}
  \energy_n(t+h_n)-\energy_{n-1}(t)\leq \delta_n.
\end{equation}
In particular, for every $M\geq0$,
\begin{equation}
\label{eq:tesi_lemma2}
\energy\biggl(\sum_{n=M}^\infty h_n\biggr)
\leq\energy_{M-1}(0)+\sum_{n=M}^\infty\delta_n.
\end{equation}

The sequence
\begin{equation}
  \label{eq:h_n_time}
h_n=\frac{\energy(0)^{3/2}}{\delta_n^2}2^{-(\alpha-\tilde\alpha)n+3/2},
\end{equation}
satisfies~\eqref{eq:tesi_lemma1} and~\eqref{eq:tesi_lemma2}.
\end{lem}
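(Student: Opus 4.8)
The strategy is to bound the energy transferred across generation $n$ during a short time window $[t,t+h_n]$ and to show that, by choosing $h_n$ small enough, this transfer can be kept below $\delta_n$. The key quantity is the flux term appearing in the energy derivative: from equation~\eqref{eq:energy_derivative} in the inviscid unforced case,
\[
\energy_n(t+h_n)-\energy_{n-1}(t)
= \bigl[\energy_n(t+h_n)-\energy_n(t)\bigr] + \bigl[\energy_n(t)-\energy_{n-1}(t)\bigr],
\]
and the second bracket is exactly $\sum_{|j|=n}X_j^2(t)$. So the real work is to estimate, over the window, both the surviving mass at level $n$ and the influx from level $n$ into level $n+1$ — but more precisely, I want to argue that if $\energy_n(t)-\energy_{n-1}(t)$ were large then a lot of energy must have flowed \emph{out} of level $n$ during a short time, contradicting the finite-energy bound. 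This is the mechanism already isolated in Lemma~\ref{lem:stime_sup_inf}: the drop $\energy_n(t+h_n)-\energy_{n-1}(t)$ is controlled by $\sum_{|j|=n}m_j^2$ (with $[s,t]$ there replaced by the window $[t,t+h_n]$), and $m_j$ is the infimum of $X_j$ on the window.

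The heart of the argument is a lower bound on the flux out of level $n$ in terms of $m_j$. Fix $j$ with $|j|=n$. If $m_j$ is large, then $X_j(r)\ge m_j$ throughout the window, so each child $k\in\O j$ satisfies a differential inequality of the form $\frac{d}{dt}X_k \ge c_k X_j^2 - c_k X_j X_k \ge c_k m_j^2 - c_k X_j X_k$, and integrating (using the integrating-factor representation~\eqref{eq:2} and Lemma~\ref{lem:esp_inequality} to handle the exponential) forces $X_k$ to grow, hence $X_j^2 X_k$ to be bounded below on a sub-window. Summing $2c_k X_j^2 X_k$ over the $N_* = 2^{2\tilde\alpha}$ children of $j$, and then over all $2^{2\tilde\alpha}$-many... rather, over all $j$ with $|j|=n$, gives a lower bound on the integrated flux $\int_t^{t+h_n}\sum_{|k|=n+1} 2c_k X_{\bar k}^2 X_k\,du$ that is roughly $c_{n+1}\,h_n^2 \cdot (\text{something quartic in the }m_j)$, with an extra factor $N_* = 2^{2\tilde\alpha}$ from the children count and $c_{n+1}\sim 2^{\alpha(n+1)}$. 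Since that flux is at most $\energy_n(t)\le\energy(0)$ by Proposition~\ref{prop:energy_balance}, we can solve for an upper bound on $\sum_{|j|=n}m_j^2$ in terms of $h_n$, $\energy(0)$, and $2^{-(\alpha-\tilde\alpha)n}$; demanding this be $\le\delta_n$ yields precisely the choice~\eqref{eq:h_n_time}, and $\sum_n h_n<\infty$ follows from the hypothesis $\sum_n\delta_n^{-2}2^{-(\alpha-\tilde\alpha)n}<\infty$.

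For~\eqref{eq:tesi_lemma2} I would iterate~\eqref{eq:tesi_lemma1}: set $t_M=0$ and $t_{n+1}=t_n+h_n$, so that $t_\infty=\sum_{n\ge M}h_n$. Telescoping $\energy_n(t_{n+1})-\energy_{n-1}(t_n)\le\delta_n$ from $n=M$ upward, using monotonicity of $\energy_n$ in $n$ and of the total energy in $t$ (the latter from Proposition~\ref{prop:energy_balance}), collapses the sum to $\energy(t_\infty)\le\energy_{M-1}(0)+\sum_{n\ge M}\delta_n$; one has to be slightly careful taking $n\to\infty$, using that $\energy_n(t_{n+1})\to\energy(t_\infty)$ by $l^2$-continuity of the solution and the fact that the $t_n$ increase to $t_\infty$. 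The main obstacle I anticipate is the flux lower bound: making the children of $j$ grow enough requires that $m_j$ itself be bounded below over the \emph{whole} window, and $m_j$ is only an infimum — so the estimate has to be set up as a dichotomy (either $m_j$ is small, contributing little to $\sum_{|j|=n}m_j^2$ directly, or $m_j$ is not small, in which case the children-growth argument applies), and the two cases must be balanced against $\delta_n$. Getting the exponents $2^{-(\alpha-\tilde\alpha)n+3/2}$ and the power $\energy(0)^{3/2}$ exactly right is a bookkeeping matter once this dichotomy is in place.
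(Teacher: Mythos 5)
Your overall skeleton matches the paper's: reduce \eqref{eq:tesi_lemma1} via Lemma~\ref{lem:stime_sup_inf} to the bound $\sum_{|j|=n}m_j^2\leq\delta_n$, force the children of each node $j$ at level $n$ to grow through a linear differential inequality plus Lemma~\ref{lem:esp_inequality}, compare the resulting lower bound on the flux across generation $n$ with the global bound $\energy(0)$, and then telescope in time for \eqref{eq:tesi_lemma2} (the last step is fine, and is exactly the paper's). But there is a genuine gap in the core estimate. First, your differential inequality for a child $k\in\O j$ is mis-stated: the drain on $X_k$ is $-\sum_{i\in\O k}c_iX_iX_k$, coming from the \emph{children of $k$}, not a term $-c_kX_jX_k$; to integrate via \eqref{eq:2} you need an upper bound on $\sum_{i\in\O k}c_iX_i$ that is uniform over the whole time window, and your sketch provides none. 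Second, and more seriously, the only bound available without further work is the crude one $X_i\leq\sqrt{\energy(0)}$, and if you run your argument with it the decay rate comes out wrong: the damping constant is then $\lambda\sim N_*2^{\alpha n}\sqrt{\energy(0)}$ for \emph{every} $j$, and after the Cauchy--Schwarz step $\sum_{|j|=n}m_j^4\geq(\sum_{|j|=n}m_j^2)^2/N_*^n$ you land on $h_n\lesssim\energy(0)^{3/2}\delta_n^{-2}2^{-(\alpha-2\tilde\alpha)n}$, which is summable only under $\sum_n\delta_n^{-2}2^{-(\alpha-2\tilde\alpha)n}<\infty$, i.e.\ effectively $\alpha>2\tilde\alpha$ --- strictly weaker than the lemma as stated and not covered by its hypotheses.

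The missing idea is the node-dependent control that the paper builds precisely for this purpose: the quantities $\gamma_j$ (initial energy of the subtree $T_j$ plus all energy ever flowing into $j$) and $\Gamma_j:=\max(\gamma_j,\energy(0)N_*^{-n})$, with the two facts from Lemma~\ref{lem:stime_sup_inf} that $X_i^2\leq\gamma_j\leq\Gamma_j$ for all $i\in T_j$ uniformly in time and that $\sum_{|j|=n}\Gamma_j\leq2\energy(0)$. This gives a damping constant $\lambda_j=N_*2^{(n+2)\alpha}\sqrt{\Gamma_j}$ that is small for most $j$ (only its \emph{sum} over the generation is controlled, not each term by $\energy(0)$), and the final step is a \emph{weighted} Cauchy--Schwarz, $\sum_{|j|=n}m_j^4/\sqrt{\Gamma_j}\geq(\sum_{|j|=n}m_j^2)^2/\sqrt{N_*^n\sum_{|j|=n}\Gamma_j}$, which is exactly where the factor $2^{-\tilde\alpha n}$ (instead of $2^{-2\tilde\alpha n}$) is gained and the exponent $\alpha-\tilde\alpha$ in \eqref{eq:h_n_time} emerges; the lower bound $\Gamma_j\geq\energy(0)N_*^{-n}$ is also needed to make $\lambda_jh_n$ large uniformly in $j$ so that the exponential factor from Lemma~\ref{lem:esp_inequality} is $\geq\frac12$. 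Your closing remark that the exponents are ``a bookkeeping matter'' therefore undersells the difficulty: without this device (or an equivalent per-node argument), the proof only works under the stronger condition $\alpha>2\tilde\alpha$, and your per-node ``dichotomy'' in its current form does not supply a substitute, since the aggregate bound $\sum_{|j|=n}m_j^2\leq\delta_n$ must be extracted from a flux estimate summed over exponentially many nodes whose individual drains cannot all be taken of order $\sqrt{\energy(0)}$.
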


\begin{proof}
Fix $n\geq0$ and positive real numbers $t$, $h_n$. For all $j$ of
generation $n$, let $m_j\coloneqq \inf_{r\in[t,t+h_n]}X_j(r)$.  We
claim that if $h_n$ is defined by~\eqref{eq:h_n_time}, then $\sum_{|j|=n}
m_j^2 \leq \delta_n$, which together with
Lemma~\ref{lem:stime_sup_inf} completes the proof
of~\eqref{eq:tesi_lemma1}.

We prove the claim by contradiction: suppose that
$\sum_{|j|=n}m_j^2>\delta_n$. We will find a contradiction in the
estimates on $\energy(0)$. By Proposition~\ref{prop:energy_inequality}
\[
\energy(0)\geq 2\int_0^{h_n}\sum_{|j|=n}\sum_{k\in\O j}c_kX_k(t+s)X_j^2(t+s)ds
\]
We have a lower bound for $X_j$, namely $m_j$, but we need one also
for $X_k$.

For all $j\in J$, let
$\Gamma_j:=\max(\gamma_j,\energy(0)N_*^{-|j|})$. From
Lemma~\ref{lem:stime_sup_inf} we have
$\sum_{|j|=n}\gamma_j\leq\energy(0)$ and hence
$\sum_{|j|=n}\Gamma_j\leq2\energy(0)$; by the same lemma, for all
$i\in T_j$ we have $X_i^2\leq\gamma_j\leq \Gamma_j$ uniformly in time,
so for all $k\in\O j$,
\begin{equation*}
%  \label{eq:27bis}
  \dot X_k=c_kX_j^2 - \sum_{i\in\O k}c_iX_iX_k
  \geq c_km_j^2 - \lambda_jX_k
\end{equation*}
where $\lambda_j=N_*2^{n\alpha+2\alpha}\sqrt{\Gamma_j}$. This gives
\begin{equation*}
%  \label{eq:28bis}
  X_k(t+s)
  \geq c_km_j^2 \int_0^se^{-\lambda_j(s-r)}dr
\end{equation*}
We can write
\[
\energy(0)
\geq 2\sum_{|j|=n}m_j^4\int_0^{h_n} \int_0^se^{-\lambda_j(s-r)}drds\sum_{k\in\O j}c_k^2
\]
and by lemma~\ref{lem:esp_inequality} we have
\[
\energy(0)
\geq 2\sum_{|j|=n}m_j^4\frac{h_n}{2\lambda_j}\left(1-e^{-\lambda_jh_n/2}\right)\sum_{k\in\O j}c_k^2
\]
Let us focus on the exponential. We substitute~\eqref{eq:h_n_time} and
make use of the inequality $\Gamma_j\geq
\energy(0)N_*^{-n}=\energy(0)2^{-2\tilde\alpha n}$,
\[
\frac{\lambda_jh_n}{2}
=N_*2^{n\alpha+2\alpha}\sqrt{\Gamma_j}\frac{\sqrt{2}\energy(0)^{3/2}}{2^{(\alpha-\tilde\alpha) n}\delta_n^2}
\geq \frac{\energy(0)^2}{\delta_n^2}\sqrt{2}
\]
By the hypothesis that $\sum_{|j|=n}m_j^2>\delta_n$ and
Lemma~\ref{lem:stime_sup_inf}, we know that $\delta_n<\energy(0)$ we
get $1-e^{-\lambda_jh_n/2}>\frac12$.
We obtain
\begin{equation}\label{eq:25bis}
\energy(0)
> \sum_{|j|=n}m_j^4\frac{h_n}{2\lambda_j}\sum_{k\in\O j}c_k^2
= \frac{\sqrt{2}\energy(0)^{3/2}}{2^{-\tilde\alpha n}\delta_n^2}\sum_{|j|=n}\frac{m_j^4}{\sqrt{\Gamma_j}}
\end{equation}
Now we can use Cauchy-Schwarz and the AM-QM inequalities to get
\[
\sum_{|j|=n}\frac{m_j^4}{\sqrt{\Gamma_j}}
\geq\frac{\left(\sum_{|j|=n}m_j^2\right)^2}{\sum_{|j|=n}\sqrt{\Gamma_j}}
\geq\frac{\left(\sum_{|j|=n}m_j^2\right)^2}{\sqrt{N_*^n\sum_{|j|=n}\Gamma_j}}
\]
again by the hypothesis that $\sum_{|j|=n}m_j^2>\delta_n$ and thanks
to $\sum_{|j|=n}\Gamma_j\leq2\energy(0)$,
\[
\sum_{|j|=n}\frac{m_j^4}{\sqrt{\Gamma_j}}
>\frac{\delta_n^2}{\sqrt{2\energy(0)}2^{\tilde\alpha n}}
\]
so that the right-hand side of~\eqref{eq:25bis} becomes larger than
$\energy(0)$, which is impossible.

We turn to the second part.  Let $M\geq 0$ and define the following
sequence $(t_n)_{n\geq M-1}$ by $t_{M-1}=0$ and
$t_n=t_{n-1}+h_n$. By~\eqref{eq:tesi_lemma1} with $t=t_{n-1}$ we get
\begin{equation*}
%  \label{eq:energy_sumbis}
  \energy_n(t_n)-\energy_{n-1}(t_{n-1})\leq \delta_n.
\end{equation*}
We sum for $n$ from $M$ to $N$, yielding
\[
\energy_N(t_N)-\energy_{M-1}(0)\leq \sum_{n=M}^N\delta_n
\]
which, due to monotonicity of $\energy_N$, yields
\[
\energy_N\biggl(\sum_{n=M}^\infty h_n\biggr)
\leq \energy_N(t_N)\leq \energy_{M-1}(0)+\sum_{n=M}^N\delta_n.
\]
Now we let $N$ go to infinity to get the thesis.
\end{proof}

\begin{rmk}
It is easy to prove this result also if relaxing the condition on the
number of children from constant number to $1\leq\sharp\O j\leq
N_*$. One has to change slightly the definition of $h_n$, which
becomes
\[
h_n
=\frac{\energy(0)^{3/2}}{\delta_n^2}2^{-(\alpha-\tilde\alpha)n+2\tilde\alpha+3/2}.
\]
\end{rmk}

\begin{thm}\label{thm:limit-energy}
Assume that $\alpha>\tilde\alpha$, where $2^{2\tilde\alpha}=N_*=
\sharp\O j$ is the constant number of children for every node.
Then for every $\epsilon>0$ and $\eta>0$ there exists some $T>0$ such
that for all positive $l^2$ solution of
\eqref{eq:System_inviscid_unforced} with initial energy
$\energy(0)\leq\eta$ one has $\energy(T)\leq\epsilon$. In particular
\[
\lim_{t\to\infty}\energy(t)=0
\]
i.e.\ there is anomalous dissipation.
\end{thm}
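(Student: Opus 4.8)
The plan is to apply Lemma~\ref{lem:en_dissip} with a carefully chosen summable sequence $(\delta_n)$ and then to leverage that, in the unforced inviscid case, $\energy$ is non-increasing (Proposition~\ref{prop:energy_balance}), so that a bound on $\energy$ at a single time propagates to all larger times. The condition $\alpha>\tilde\alpha$ is exactly what allows the two convergence requirements of Lemma~\ref{lem:en_dissip} to be met simultaneously by a sequence $\delta_n\to0$.

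Concretely, fix $\epsilon>0$ and $\eta>0$, set $c:=\tfrac14(\alpha-\tilde\alpha)>0$, and for a small parameter $\epsilon_0\in(0,1]$ take $\delta_n:=\epsilon_0\,2^{-cn}$. Then $\sum_n\delta_n=\epsilon_0/(1-2^{-c})$ and $\sum_n\delta_n^{-2}2^{-(\alpha-\tilde\alpha)n}=\epsilon_0^{-2}\sum_n2^{-(\alpha-\tilde\alpha)n/2}<\infty$, so the hypotheses of Lemma~\ref{lem:en_dissip} hold; pick $\epsilon_0$ so small that $\sum_n\delta_n\le\epsilon$. Applying the lemma with $M=0$ (using $\energy_{-1}\equiv0$) gives, for every positive $l^2$ solution,
\[
\energy\Bigl(\sum_{n\ge0}h_n\Bigr)\le\sum_{n\ge0}\delta_n\le\epsilon,
\qquad
h_n=\frac{\energy(0)^{3/2}}{\delta_n^2}\,2^{-(\alpha-\tilde\alpha)n+3/2}.
\]
Next, introduce the solution-independent number $T:=\sum_{n\ge0}\tilde h_n$ with $\tilde h_n:=\eta^{3/2}\delta_n^{-2}2^{-(\alpha-\tilde\alpha)n+3/2}$; this series converges since its $n$-th term equals a fixed constant times $2^{-(\alpha-\tilde\alpha)n/2}$. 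If $\energy(0)\le\eta$ then $h_n\le\tilde h_n$ for all $n$, so $\sum_{n\ge0}h_n\le T$, whence, by monotonicity of $\energy$,
\[
\energy(T)\le\energy\Bigl(\sum_{n\ge0}h_n\Bigr)\le\epsilon.
\]
This is the first assertion, with $T=T(\epsilon,\eta,\alpha,\tilde\alpha)$ independent of the solution.

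For the ``in particular'' statement, fix a positive $l^2$ solution and run the above with $\eta=\energy(0)$: for every $\epsilon>0$ there is $T(\epsilon)$ with $\energy(T(\epsilon))\le\epsilon$, and monotonicity gives $\energy(t)\le\epsilon$ for all $t\ge T(\epsilon)$, hence $\energy(t)\to0$. If $\energy(0)>0$, this means $\energy$ is not conserved, so the limit term in~\eqref{eq:conservativity} is strictly positive and anomalous dissipation occurs. The one step that needs attention is obtaining a $T$ uniform over all solutions with $\energy(0)\le\eta$, which is handled by the comparison $h_n\le\tilde h_n$ together with the monotonicity of $\energy$; the genuinely hard content — the iteration over generations and the contradiction estimate forcing $\sum_{|j|=n}m_j^2\le\delta_n$ — is already packaged inside Lemma~\ref{lem:en_dissip}, so little remains to be done here.
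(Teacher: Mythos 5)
Your proposal is correct and follows essentially the same route as the paper: choose a geometric sequence $(\delta_n)$ summing to at most $\epsilon$ and satisfying both summability conditions (possible precisely because $\alpha>\tilde\alpha$), apply Lemma~\ref{lem:en_dissip} with $M=0$, bound $\sum_n h_n$ by a solution-independent $T$ using $\energy(0)\leq\eta$ in~\eqref{eq:h_n_time}, and conclude by the monotonicity of $\energy$ from Proposition~\ref{prop:energy_balance}. The only differences (decay exponent $(\alpha-\tilde\alpha)/4$ instead of $(\alpha-\tilde\alpha)/3$, and the explicit comparison $h_n\leq\tilde h_n$) are cosmetic.
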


\begin{proof}
Given $\epsilon>0$ let us take a sequence of positive numbers
$(\delta_n)_{n\geq0}$ such that
\[
\sum_{n=o}^\infty \delta_n
=\epsilon\qquad \mathrm{and}\qquad \sum_{n=o}^\infty\frac{1}{2^{(\alpha-\tilde\alpha)n}\delta_n^2}<+\infty
\]
This is possible, for example, taking
$\delta_n=\epsilon(1-2^{-(\alpha-\tilde\alpha)/3})2^{-(\alpha-\tilde\alpha)n/3}$.
Now Lemma~\ref{lem:en_dissip} applies, so by the definition of $h_n$
given in~\eqref{eq:h_n_time}
\[
h_n
\leq\frac{2\sqrt{2}\eta^{3/2}}{2^{(\alpha-\tilde\alpha) n}\delta_n^2}
\qquad \text{and}\qquad
\sum_{n=0}^\infty h_n
\leq\frac{2\sqrt{2}\eta^{3/2}}{(1-2^{-(\alpha-\tilde\alpha) /3})^3}
=:T.
\]
Take $M=0$ in~\eqref{eq:tesi_lemma2} and by monotonicity of energy
$\energy(T)\leq\epsilon$.
\end{proof}

%Now we prove a bound from above for the decay of energy as $t\to
%\infty$. The result is due to a scaling argument based on the fact
%that the non-linearity is homogeneous of degree two. 

%\begin{thm}
%Assume that $\alpha>\tilde\alpha$, where $2^{2\tilde\alpha}=N_*=
%\sharp O_j$ is the constant number of children for every node.  Then
%for any positive $l^2$ solution there exists $C>0$ depending only
%on $\energy(0)$, such that for all $t>0$,
%\[
%\energy(t) \leq \frac{C}{t^2}.
%\]
%\end{thm}

We are finally able to prove Theorem~\ref{thm:main_1}, which is a
consequence of Theorem~\ref{thm:limit-energy} with a rescaling
argument based on the fact that the non-linearity is homogeneous of
degree two.

\begin{proof}[Proof of Theorem~\ref{thm:main_1}]
By Theorem~\ref{thm:limit-energy} for every $0<\rho<1$ there exists
$\tau>0$ depending only on $\rho$ and $\energy(0)$, such that
$\energy(\tau)\leq\rho^2\energy(0)$. We will apply this bound to many
different solutions, all of which have energy at time zero not above
$\energy(0)$.

Let $\vartheta=1/\rho>1$. We can define the sequence
\begin{align*}
  &X^{(0)}=X\\
&X^{(n)}(t)=\vartheta X^{(n-1)}(\vartheta t +
  \tau)=\vartheta^n X\biggl(\vartheta^n t +
  \frac{\vartheta^n-1}{\vartheta -1}\tau\biggr),\qquad n\geq1
\end{align*}
It is immediate to verify that all of these satisfy the system of
equations~\eqref{eq:System_inviscid_unforced}, but with possibly
different initial conditions. We have
\[
\sum_{j\in J}\bigl(X_j^{(n)}(0)\bigr)^2
=\vartheta^2 \sum_{j\in J}\bigl(X_j^{(n-1)}(\tau)\bigr)^2.
\]
Recalling the definition of $\tau$, the above equation allows to prove
by induction on $n$ that for all $n\geq0$ one has $\sum_{j\in
  J}\bigl(X_j^{(n)}(0)\bigr)^2 \leq \energy(0)$. For all $n\geq0$, let
\[
t_n=\frac{\vartheta^n-1}{\vartheta -1}\tau.
\]
Then by the definition of $X^{(n)}$, we have proved $\energy(t_n)^2
\leq \vartheta^{-2n}\energy\left(0\right)$.  Since $\vartheta>1$,
$t_n\uparrow\infty$, hence given $t>0$ there is $n$ such that $t_n\leq
t < t_{n+1}$. That means we have by monotonicity
\[
\energy(t)
\leq \vartheta^{-2n}\energy(0)
\qquad\text{and}\qquad
\frac1{t_{n+1}^2}
<\frac1{t^2}
\]
finally, by definition $t_{n+1}<
\frac{\vartheta^{n+1}}{\vartheta-1}\tau=\vartheta^n\frac\tau{1-\rho}$,
so for $C=\energy(0)\bigl(\frac\tau{1-\rho}\bigr)^2$ we get
\[
\energy(t)
\leq\vartheta^{-2n}\energy\left(0\right)
<\frac C{t_{n+1}^2}
<\frac C{t^2}\qedhere
\]
\end{proof}

\subsection{Self-similar solutions}
\label{subsec:self_similar}
We devote the end of this section to prove the existence of
self-similar solutions. We call \emph{self-similar} any solution $X$
of system~\eqref{eq:System_inviscid_unforced} of the form
$X_j(t)=a_j\varphi(t)$, for all $j$ and all $t\geq0$. By substituting
this formula inside~\eqref{eq:System_inviscid_unforced} it is easy to
show that any such solution must be of the form
\[
X_j(t)=\frac{a_j}{t-t_0}
\]
for some $t_0<0$.  The condition on the coefficients $a_j$ is much
more complicated
\[
\left\{
\begin{aligned}
&a_{\bar0}=0\\
&a_j + c_j a_{\bar\jmath}^2 = \sum_{k\in\O j}c_ka_ja_k,  \qquad\forall j\in J
\end{aligned}
\right.
\]
so we base instead our argument upon~\cite{MR2746670}, where it is
proven existence and some kind of uniqueness of self-similar solution. 
We obtain the following statement.
\begin{prop}
Given $t_0<0$ there exists at least one self-similar positive $l^2$
solution of~\eqref{eq:System_inviscid_unforced} with $a_0>0$.
\end{prop}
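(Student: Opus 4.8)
The plan is to reduce the problem to the self-similar existence result for the classic dyadic model, exactly as Proposition~\ref{prop:class_to_tree} allows us to lift generic solutions. Concretely, suppose we have a self-similar positive $l^2$ solution of the classic dyadic model~\eqref{eq:system_diadico} in the inviscid unforced case, of the form $Y_n(t)=b_n/(t-t_0)$ with $b_0>0$; such a solution is provided by the work of~\cite{MR2746670}. Then set $a_j:=2^{-(|j|+2)\tilde\alpha}b_{|j|}$ and $X_j(t):=a_j/(t-t_0)=2^{-(|j|+2)\tilde\alpha}Y_{|j|}(t)$. By Proposition~\ref{prop:class_to_tree}, with the matching of exponents $\alpha=\beta+\tilde\alpha$, this $X$ is a componentwise solution of~\eqref{eq:system_albero} in the inviscid unforced case (i.e.\ of~\eqref{eq:System_inviscid_unforced}), it is positive, and it lies in $l^2$ for all $t$; moreover it has exactly the self-similar form $X_j(t)=a_j\varphi(t)$ with $\varphi(t)=1/(t-t_0)$. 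Positivity of $a_0=2^{-2\tilde\alpha}b_0>0$ is immediate.

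The one point requiring care is the relation between the exponents. The self-similar existence theorem for the classic model in~\cite{MR2746670} is stated for a specific value of the spatial coefficient exponent (there $\beta$); we need a version valid for the exponent $\beta=\alpha-\tilde\alpha$ that arises here. The hypothesis of our proposition does not even mention $\alpha$ or $\tilde\alpha$, so in fact the statement should be read as: fix whatever $\alpha>0$ and $N_*=2^{2\tilde\alpha}$ are given in the ambient model of this section, and recall we are in the regime $\tilde\alpha<\alpha$ so that $\beta=\alpha-\tilde\alpha>0$. The cited self-similarity construction of~\cite{MR2746670} works for any positive value of the exponent $\beta$ (the argument there is a fixed-point / monotone-iteration scheme on the coefficient sequence $(b_n)$ that does not depend on $\beta$ having a particular value, only on $\beta>0$), so it applies verbatim with our $\beta$. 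This gives the needed input sequence $(b_n)$ with $b_0>0$ and $\sum_n b_n^2<\infty$.

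With that input in hand, the verification is the routine computation already recorded in the proof of Proposition~\ref{prop:class_to_tree}: a direct substitution shows the lifted family solves the tree equations, the energy identity $\sum_{|j|=n}X_j^2=2^{4\tilde\alpha}Y_n^2$ gives $\energy(t)=2^{4\tilde\alpha}\|Y(t)\|^2<\infty$, and positivity and the self-similar shape are inherited. The only genuine obstacle, then, is making sure the classic self-similar result is invoked with the correct generality in $\beta$; once that is granted, everything else is bookkeeping. I would therefore organize the proof as: (i) state that by~\cite{MR2746670} applied with exponent $\beta=\alpha-\tilde\alpha>0$ there is a self-similar positive $l^2$ solution $Y_n(t)=b_n/(t-t_0)$ of~\eqref{eq:system_diadico} (inviscid, unforced) with $b_0>0$; (ii) lift it via Proposition~\ref{prop:class_to_tree} to $X_j(t)=2^{-(|j|+2)\tilde\alpha}Y_{|j|}(t)$; (iii) observe that $X$ is a self-similar positive $l^2$ solution of~\eqref{eq:System_inviscid_unforced} with $X_j(t)=a_j/(t-t_0)$, $a_j=2^{-(|j|+2)\tilde\alpha}b_{|j|}$, and $a_0=2^{-2\tilde\alpha}b_0>0$, which is exactly the claim.
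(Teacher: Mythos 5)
Your proposal is correct and follows essentially the same route as the paper: invoke the self-similar existence result of~\cite{MR2746670} for the classic dyadic model (with exponent $\beta=\alpha-\tilde\alpha>0$) to get $Y_n(t)=b_n/(t-t_0)$ with $b_0>0$, then lift it to the tree via Proposition~\ref{prop:class_to_tree}, which preserves positivity, the $l^2$ property, and the self-similar form. Your additional remark about checking that the cited result applies for general $\beta>0$ (not just a specific value) is a reasonable point of care, but it does not change the argument, which matches the paper's proof.
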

\begin{proof}
We use Theorem~10 in~\cite{MR2746670} which, translated in the
notation of this paper, states that there exists a unique sequence of
non-negative real numbers $(b_n)_{n\geq0}$ such that $b_0>0$ and
$Y_n:=\frac{b_n}{t-t_0}$ is a positive $l^2$ solution of the unforced
inviscid classic dyadic~\eqref{eq:system_classic_dyadic_general}.
Thanks to Proposition~\ref{prop:class_to_tree} this solution may be
lifted to a solution of the inviscid tree
dyadic~\eqref{eq:system_tree_dyadic_general} with the required
features.
\end{proof}
\begin{rmk}
For the tree dyadic model self-similar solutions are many. In the
standard dyadic case studied in~\cite{MR2746670} it is shown that
given $t_0<0$ and $n_0\geq1$ there is only one $l^2$ self-similar
solution such that $n_0$ is the index of the first non-zero
coefficient. If $n_0>1$, this solution can be lifted on the tree to a
self-similar solution which is zero on the first $n_0-1$
generations. We can then define a new self-similar solution which is
equal to this one on one of the subtrees starting at generation $n_0$
and zero everywhere else. Finally, we can combine many of these
solutions, even with different $n_0$, as long as $t_0$ is the same for
all and their subtrees do not overlap.
\end{rmk}

\section{Stationary solutions}
\label{sec:stat-sol}
In this section we will study the stationary solutions for both the
classic dyadic model~\eqref{eq:system_diadico} and the tree dyadic
one~\eqref{eq:system_albero}. We will in particular restrict ourselves
to study positive $l^2$ solutions which are time independent.
Proposition~\ref{prop:class_to_tree} allows us to link the two models,
in that for any solution of the classic dyadic model one can build a
solution of the tree dyadic model. Thus is it enough to prove
existence for the classic dyadic and uniqueness for the tree dyadic.

One purpose of this section is to prove the existence and uniqueness
of stationary solution on the tree dyadic model and extend existence
and uniqueness results given in~\cite{CheFriPav2007}
and~\cite{MR2522972} for the dyadic model.  In~\cite{CheFriPav2007} it
is proven that the inviscid dyadic model with $\beta=\frac{5}{2}$ has
a unique stationary solution, while in the companion
paper~\cite{CheFriPav2010} it is proven that such a solution is a
global attractor.  The viscous dyadic model is studied
in~\cite{MR2522972}, where it is proven that for
$\beta\in(\frac{3}{2},\frac{5}{2}]$ the stationary solution is unique
and is a global attractor.  In~\cite{BarMorRom} it is proven that for
the viscous case it is possible, dropping the $Y_n\geq0$ condition, to
explicitly provide examples of non-uniqueness of the stationary
solution.  In this paper we prove the existence and uniqueness of
stationary solutions in $l^2$ for every positive value of the $\beta$
and $\gamma$ parameters both in viscous and inviscid dyadic models.
This will provide a corresponding result of existence and uniqueness
for $\alpha>\tilde\alpha$ and $\gamma>0$ in the tree dyadic model.
Furthermore in the inviscid case we will explicitly provide those
solutions (Proposition~\ref{prop:sol_sta}), while in the viscous case
we'll prove that the stationary solutions are regular if and only if
$N_*$ is big enough, $N_*\geq 2^{2\alpha-3\gamma}$ or the forcing term
$f$ is small.  For $f=0$ the unique (non-negative) stationary solution
is trivially the null one, so in this section we assume $f>0$.

\subsection{Stationary solutions in the inviscid case: existence.}

In the inviscid case, the differential equation is very simple, so it
is easy to find stationary solutions in the class of exponential
functions. One immediately finds the following result.

\begin{prop}\label{prop:sol_sta}
Consider the tree dyadic model~\eqref{eq:system_albero} and the
classic dyadic model~\eqref{eq:system_diadico}, both inviscid
($\nu=0$). Let $2^{2\tilde\alpha}=N_*=\sharp\O j$ be constant for all
$j\in J$. Then:
\begin{enumerate} 
\item the sequence of constant functions $Y_n(t):=f\cdot
  2^{-\frac{\beta}{3}(n+1)}$ is a positive $l^2$ solution of
  the system~\eqref{eq:system_diadico}.
\item the family of constant functions $X_j(t):=f\cdot
  2^{-\frac{|j|+1}{3}(2\tilde\alpha+\alpha)}$ for $j\in J$ is a
  positive componentwise solution of system~\eqref{eq:system_albero};
  it is also an $l^2$ solution iff $\alpha>\tilde\alpha$;
\end{enumerate}
\end{prop}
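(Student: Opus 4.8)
The plan is a direct verification. Both proposed families are constant in time, so the left-hand sides of the differential equations in~\eqref{eq:system_diadico} and~\eqref{eq:system_albero} vanish, and it only remains to check that at each node the ``input'' term balances the ``output'' term, that the boundary (forcing) condition holds, and that the candidate lies in $l^2$. Positivity is trivial since $f\geq0$ and all factors are powers of $2$.

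For part (1), set $Y_n := f\cdot 2^{-\frac{\beta}{3}(n+1)}$. The boundary condition $Y_{-1}\equiv f$ is immediate. Since $\dot Y_n\equiv0$, I only need $k_n Y_{n-1}^2 = k_{n+1} Y_n Y_{n+1}$ for every $n\geq0$; with $k_n=2^{\beta n}$ a one-line computation gives $k_n Y_{n-1}^2 = f^2 2^{\beta n/3} = k_{n+1}Y_nY_{n+1}$, so the equation holds and $Y$ is a positive componentwise solution. Finally $\sum_n Y_n^2 = f^2\sum_{n\geq0} 2^{-\frac{2\beta}{3}(n+1)}$ is a convergent geometric series because $\beta>0$, hence $Y$ is an $l^2$ solution.

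For part (2), set $X_j := f\cdot 2^{-\frac{|j|+1}{3}(2\tilde\alpha+\alpha)}$, extending the formula to $\bar0$ via the convention $|\bar0|=-1$, which yields exactly $X_{\bar0}=f$. Again $\dot X_j\equiv0$, so I must check $c_j X_{\bar\jmath}^2 = \sum_{k\in\O j} c_k X_j X_k$ for all $j\in J$. All children $k$ of a node $j$ share the generation $|j|+1$, hence the same coefficient $c_k=2^{\alpha(|j|+1)}$ and the same value $X_k$, and there are $N_*=2^{2\tilde\alpha}$ of them, so the right-hand side equals $2^{2\tilde\alpha}c_kX_jX_k$. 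Taking base-$2$ logarithms of the desired identity $c_jX_{\bar\jmath}^2 = 2^{2\tilde\alpha}c_kX_jX_k$ and cancelling the $|j|$-dependent terms, everything collapses to the tautology $2\tilde\alpha+\alpha = 2\tilde\alpha+\alpha$ --- which is precisely why the exponent $\frac{2\tilde\alpha+\alpha}{3}$ is the right one. Hence $X$ is a positive componentwise solution of~\eqref{eq:system_albero}.

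It remains to settle the $l^2$ dichotomy. Grouping by generation, $\sum_{j\in J}X_j^2 = f^2\sum_{n\geq0} N_*^{\,n}\, 2^{-\frac{2(n+1)}{3}(2\tilde\alpha+\alpha)}$, a geometric series of ratio $N_*\,2^{-\frac23(2\tilde\alpha+\alpha)} = 2^{2\tilde\alpha-\frac23(2\tilde\alpha+\alpha)} = 2^{\frac23(\tilde\alpha-\alpha)}$. This ratio is $<1$ exactly when $\alpha>\tilde\alpha$, in which case $X\in l^2$, and $\geq1$ otherwise, in which case the series diverges; this gives the stated equivalence. There is no genuine obstacle here: the only points requiring a little care are the bookkeeping of the root node $0$ together with the forcing (handled by the convention $|\bar0|=-1$) and the correct identification of the geometric ratio in the last sum. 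One could alternatively deduce (2) from (1) through Proposition~\ref{prop:class_to_tree}, but then one must start from the classic solution with forcing $2^{\tilde\alpha}f$, since lifting rescales the forcing by $2^{-\tilde\alpha}$; the direct check above avoids this subtlety.
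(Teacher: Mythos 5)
Your proof is correct and follows essentially the same route as the paper: a direct verification that the constant families satisfy the (inviscid) equations, plus the generation-wise geometric series with ratio $2^{\frac23(\tilde\alpha-\alpha)}$ for the $l^2$ criterion. The extra remark about the forcing rescaling under the lift of Proposition~\ref{prop:class_to_tree} is accurate, but not needed since you verify part (2) directly, just as the paper does.
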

\begin{proof}
A direct computation shows that $X$ and $Y$ are componentwise
solutions. To show that $Y$ is $l^2$ observe that, since $\beta>0$,
$\|Y\|<\infty$. To check whether $X$ is $l^2$ compute the energy by
generations; we have for $n\geq0$,
\[
\energy_n-\energy_{n-1}
=\sum_{|j|=n}X_j^2
=2^{2\tilde\alpha n}f^2\cdot2^{-\frac{n+1}{3}(4\tilde\alpha+2\alpha)}
=C\cdot 2^{\frac{2}{3}\left(\tilde\alpha-\alpha\right)n}
\]
with $C$ not depending on $n$. Hence $X$ is $l^2$ if and only if
$\alpha-\tilde\alpha>0$.
\end{proof}

\subsection{Stationary solutions in the viscous case: existence.}
In the viscous case, the recurrence relation coming from the
definition of stationary solution is more complex, and has no
solutions in the class of exponential functions. Anyway, by careful
control of the recurrence behavior, we are able to prove that a
stationary solution exists, and also to distinguish if it is
conservative or has anomalous dissipation.

\begin{defi}
We say that a stationary positive $l^2$ solution $X$ is regular if for
all $h\in\R$
\begin{equation}\label{eq:def_stat_regular}
\sum_{j\in J}[2^{h|j|}X_j]^2<\infty
\end{equation}
\end{defi}

\begin{thm}\label{thm:esi_sol_sta_dia_vis}
There exists a stationary positive $l^2$
solution of the classic dyadic model~\eqref{eq:system_diadico} when
$\nu>0$.
\end{thm}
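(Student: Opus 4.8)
The plan is to recast stationarity for \eqref{eq:system_diadico} as a scalar shooting problem. If $Y$ is a positive stationary solution then every $Y_n>0$: indeed, if $Y_n=0$ for some $n\geq0$, the $n$-th stationary equation gives $k_nY_{n-1}^2=0$, hence $Y_{n-1}=0$, and descending we would reach $Y_{-1}=f=0$, contradicting $f>0$. With all $Y_n>0$ the $n$-th stationary equation is equivalent to the recursion
\[
Y_{n+1}=\frac{k_nY_{n-1}^2-\nu l_nY_n}{k_{n+1}Y_n}=\frac{k_n}{k_{n+1}}\cdot\frac{Y_{n-1}^2}{Y_n}-\frac{\nu l_n}{k_{n+1}},\qquad n\geq0,
\]
so, with $Y_{-1}=f$ fixed, a stationary solution is uniquely determined by the single number $a:=Y_0>0$, and existence reduces to finding $a$ for which the resulting sequence $(Y_n(a))_n$ is well defined and strictly positive for every $n$. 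Since $Y_1(a)=(f^2-\nu a)/(2^\beta a)$ is strictly decreasing in $a$ and positive exactly on $(0,f^2/\nu)$, I restrict the search to $a\in(0,f^2/\nu)$.

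First I would establish a monotonicity property with alternating sign: on any $a$-interval where $Y_0(a),\dots,Y_m(a)$ are all strictly positive, the maps $a\mapsto Y_{2k}(a)$ are strictly increasing and $a\mapsto Y_{2k+1}(a)$ strictly decreasing, which follows by induction on $m$ from the recursion, each $Y_{n+1}$ being an increasing function of $Y_{n-1}^2/Y_n$ and the base case being $Y_0(a)=a$, $Y_1(a)=(f^2-\nu a)/(2^\beta a)$. Then I would construct, by induction on $k\geq1$, strictly nested open intervals $(a_{2k},a_{2k+1})\subset(a_{2k-2},a_{2k-1})$, starting from the convention $(a_0,a_1):=(0,f^2/\nu)$, such that $Y_0(a),\dots,Y_{2k+1}(a)$ are all strictly positive for $a\in(a_{2k},a_{2k+1})$. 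The inductive step is an intermediate-value argument driven by the recursion and the boundary behaviour: as $a\downarrow a_{2k-2}$ one has $Y_{2k-2}(a)\to0^+$, forcing $Y_{2k-1}(a)\to+\infty$ and hence $Y_{2k}(a)\to-\nu l_{2k-1}/k_{2k}<0$, while as $a\uparrow a_{2k-1}$ one has $Y_{2k-1}(a)\to0^+$ and hence $Y_{2k}(a)\to+\infty$; since $Y_{2k}(\cdot)$ is increasing (and defined throughout $(a_{2k-2},a_{2k-1})$, where $Y_{2k-1}>0$) it has a unique zero $a_{2k}$, and $Y_{2k}>0$ on $(a_{2k},a_{2k-1})$. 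Applying the same reasoning one step up, $Y_{2k+1}(\cdot)$ is decreasing on $(a_{2k},a_{2k-1})$, runs from $+\infty$ at $a_{2k}^{+}$ down to $-\nu l_{2k}/k_{2k+1}<0$ at $a_{2k-1}^{-}$, so has a unique zero $a_{2k+1}$, with all of $Y_0,\dots,Y_{2k+1}$ positive on $(a_{2k},a_{2k+1})$.

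Now $(a_{2k})_k$ is strictly increasing and bounded above (by $a_3$), $(a_{2k+1})_k$ strictly decreasing and bounded below (by $a_2$), and $a_{2k}<a_{2j+1}$ for all $j,k$, so $\bigcap_k(a_{2k},a_{2k+1})\neq\emptyset$; choosing any $a^{\sharp}$ in this intersection, the sequence $(Y_n(a^{\sharp}))_n$ is a positive componentwise stationary solution. To see it lies in $l^2$ I would argue directly rather than through the abstract energy balance: multiplying the $n$-th stationary equation by $Y_n$ and summing over $0\leq n\leq N$, the cubic terms telescope and leave
\[
0=-\nu\sum_{n=0}^{N}l_nY_n^2+f^2Y_0-k_{N+1}Y_N^2Y_{N+1},
\]
so, since all $Y_n>0$, we get $\nu\sum_{n=0}^{N}l_nY_n^2\leq f^2Y_0$ for every $N$; as $l_n\geq1$ this yields $\sum_nY_n^2\leq f^2Y_0/\nu<\infty$. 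Hence $(Y_n(a^{\sharp}))_n$ is a positive $l^2$ stationary solution, which is the assertion.

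I expect the crux to be the inductive construction of the intervals: verifying, uniformly in $k$, both the alternating monotonicity and the precise boundary limits — that approaching one endpoint drives some $Y_{2k-2}\to0^{+}$, hence $Y_{2k-1}\to+\infty$, hence the following term to a strictly negative limit, which is exactly what makes the intermediate-value step close. The remaining ingredients — the reduction to the recursion, the telescoping identity, and passing to the limit in $N$ — are routine. One could alternatively quote the detailed analysis of this recursion from \cite{MR2522972}, but the self-contained shooting argument seems cleaner and it also prepares the ground for the regularity versus anomalous-dissipation dichotomy needed later in Theorem~\ref{thm:main_3}.
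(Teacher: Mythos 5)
Your proposal is correct and follows essentially the same route as the paper: a shooting argument in $a=Y_0$ with alternating monotonicity of the iterates, nested open intervals obtained from the boundary limits and the intermediate value theorem, a point in the (nonempty) intersection giving a positive componentwise stationary solution, and the telescoping identity $\nu\sum_{n\le N} l_nY_n^2\le f^2Y_0$ for the $l^2$ bound. The only difference is that the paper first rescales to $Z_n=\nu^{-1}2^{\frac{\beta}{3}(n+2)}Y_n$ to simplify the recursion, which is purely cosmetic.
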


\begin{thm}\label{thm:reg_sol_sta_dia_vis}
Consider any stationary positive $l^2$ solution of the classic dyadic
model~\eqref{eq:system_diadico} with $\nu>0$.
\begin{enumerate}
\item If $3\gamma\geq2\beta$ then it is regular and conservative.
\item If $3\gamma<2\beta$ then there exists some $C>0$ such that if
  $f>C$ the stationary solution is not regular and there is anomalous
  dissipation.
\end{enumerate}
\end{thm}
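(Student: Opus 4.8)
The plan is to read off the stationary recursion, isolate the energy flux $\Phi_n$, and then analyze the decay of $Y_n$ separately in the two regimes. Stationarity forces $Y_{-1}=f$ and, for $n\ge0$, $\nu l_nY_n+k_{n+1}Y_nY_{n+1}=k_nY_{n-1}^2$; reading this at $n=0$ (with $\nu>0$, $f>0$) gives $Y_0>0$, hence $Y_n>0$ for all $n$. Set $\Phi_n:=k_{n+1}Y_n^2Y_{n+1}$. From the classic analogue of~\eqref{eq:energy_derivative} together with $\frac d{dt}\energy_n\equiv0$ one gets $\Phi_{n-1}-\Phi_n=\nu l_nY_n^2\ge0$ and $\nu\sum_{m\le n}l_mY_m^2=f^2Y_0-\Phi_n$, so $\Phi_n\downarrow\Phi_\infty\ge0$, $\nu\sum_m l_mY_m^2=f^2Y_0-\Phi_\infty<\infty$, and by Definition~\ref{def:conservative} the solution is conservative iff $\Phi_\infty=0$. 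I will also use repeatedly that a regular solution has $Y_n$ decaying faster than every exponential, so $\Phi_{n-1}=k_nY_{n-1}^2Y_n\to0$; in particular \emph{regular $\Rightarrow$ conservative}, equivalently \emph{anomalous dissipation $\Rightarrow$ not regular}.

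For part (1), drop the non-negative term $k_{n+1}Y_nY_{n+1}$ from the recursion to get $Y_n\le\nu^{-1}2^{(\beta-\gamma)n}Y_{n-1}^2$. If $\beta\le\gamma$ this reads $Y_n\le\nu^{-1}Y_{n-1}^2$, which produces super-exponential decay once $Y_N<\nu$ (true for large $N$, since $Y\in l^2$). If $\beta>\gamma$, set $Z_n:=2^{(\beta-\gamma)n}Y_n$, so $Z_n\le BZ_{n-1}^2$ with $B=2^{2(\beta-\gamma)}/\nu$. The hypothesis $3\gamma\ge2\beta$ gives $2(\beta-\gamma)\le\gamma$, whence $\sum_nZ_n^2\le\sum_n2^{\gamma n}Y_n^2<\infty$ and $Z_n\to0$; choosing $N$ with $Z_N\le\tfrac1{2B}$ yields $Z_{N+k}\le B^{-1}(BZ_N)^{2^k}$, so $Z_n$ — hence $Y_n=2^{-(\beta-\gamma)n}Z_n$ — decays faster than every exponential. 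Thus $\sum_n(2^{hn}Y_n)^2<\infty$ for all $h$: $Y$ is regular, and by the remark above also conservative.

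For part (2) it suffices, again by that remark, to prove $\Phi_\infty>0$ whenever $f$ is large. Let $\tilde Y_n:=f\,2^{-\frac\beta3(n+1)}$ be the inviscid stationary solution (Proposition~\ref{prop:sol_sta}) and write $Y_n=\tilde Y_nv_n$, so $v_{-1}=1$. Using the inviscid identity $k_{n+1}\tilde Y_n\tilde Y_{n+1}=k_n\tilde Y_{n-1}^2$, the stationary equation becomes
\[
v_n(\epsilon_n+v_{n+1})=v_{n-1}^2,\qquad \epsilon_n=\epsilon_0\rho^n,\quad \epsilon_0=\tfrac{\nu}{f}2^{-\beta/3},\quad \rho=2^{\gamma-\frac{2\beta}3},
\]
and a direct computation gives $\Phi_n=f^3 2^{-\beta/3}\,v_n^2v_{n+1}$. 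The hypothesis $3\gamma<2\beta$ is exactly what makes $\rho\in(0,1)$, so $\epsilon_n$ is summable and $\epsilon_0\to0$ as $f\to\infty$; since $\Phi_n$ decreases, $\Phi_\infty>0$ iff $\liminf_nv_n>0$.

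The heart of part (2) is showing $\inf_nv_n\ge\tfrac12$ for $\epsilon_0$ small. When $\epsilon_0=0$ the recursion is $v_nv_{n+1}=v_{n-1}^2$, whose only bounded solution with $v_{-1}=1$ is $v_n\equiv1$ — any other $v_0$ makes $|\log v_n|$ grow like $2^n$ (characteristic roots $1$ and $-2$ of the linearization). For $\epsilon_0>0$ small this is a regular perturbation: with $u_n=\log v_n$ the equation is $u_{n+1}+u_n-2u_{n-1}=-\eta_n(u)$, $\eta_n(u)=-\log(1-\epsilon_ne^{u_n-2u_{n-1}})$, and on $\{u_{-1}=0,\ \|u\|_\infty\le\delta\}$ one has $0\le\eta_n(u)\le2e^{3\delta}\epsilon_0\rho^n$ once $\epsilon_0e^{3\delta}\le\tfrac12$. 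Inverting the left-hand operator subject to $u_{-1}=0$ and boundedness (which discards the $(-2)^n$ mode) and running a contraction argument in $\ell^\infty$ gives, for $\epsilon_0$ small, a unique solution with $\|u\|_\infty\le\delta$, i.e.\ $v_n\in[e^{-\delta},e^{\delta}]$. Then $Y_n=\tilde Y_nv_n$ is a positive $l^2$ stationary solution equal to $f$ at level $-1$, so by uniqueness (Theorem~\ref{thm:main_2}) it is the given one; hence $\Phi_\infty=f^32^{-\beta/3}\lim v_n^2v_{n+1}\ge\tfrac18f^32^{-\beta/3}>0$ (anomalous dissipation), and $Y_n\ge e^{-\delta}\tilde Y_n$ makes $\sum_n(2^{hn}Y_n)^2$ diverge for $h\ge\beta/3$, so $Y$ is not regular. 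The main obstacle is precisely this perturbation step: the viscous term is a singular perturbation (a new term at every level) and the second-order recursion pins $v_0$ only through the global condition that the $(-2)^n$ mode be absent, so one cannot iterate forward naively and must invert the linearized recursion in sequence space; if one wishes to avoid invoking uniqueness, the same conclusion can be reached by a more delicate direct argument showing that a too-small $v_N$ (with $v_{N-1}$ bounded below) forces a large/small alternation that eventually drives some $v_m$ negative, contradicting positivity.
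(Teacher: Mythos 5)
Your argument is correct, and part (2) takes a genuinely different route from the paper. Part (1) is essentially the paper's mechanism in a different normalization: the paper sets $Z_n=\nu^{-1}2^{\frac\beta3(n+2)}Y_n$, gets $Z_{n+1}=Z_{n-1}^2/Z_n-2^{\mu n}$ with $\mu=\gamma-\tfrac23\beta\ge0$, deduces $Z_n\to0$ from the summed flux identity and then iterates $Z_n<Z_{n-1}^2$; your version (drop the flux term, use $\nu\sum_n 2^{\gamma n}Y_n^2\le f^2Y_0$ from the energy balance, iterate $Z_n\le BZ_{n-1}^2$) is the same idea and is fine. For part (2) the paper argues directly on an arbitrary stationary solution: it first shows $Z_n$ is non-increasing (if some ratio $Z_n/Z_{n-1}=\lambda>1$, a short computation forces $Z_{n+2m}/Z_{n+2m-1}>\lambda^{4^m}$ and eventually a negative term, contradicting positivity), and then the identity $Z_n^2Z_{n+1}=g^2Z_0-\sum_{k\le n}2^{\mu k}Z_k^2$ yields $Z_n\downarrow z>0$ as soon as $g>1/(1-2^{\mu})$, whence a positive flux limit and failure of regularity. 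You instead perturb off the explicit inviscid profile, writing $Y_n=\tilde Y_nv_n$, $v_n(\epsilon_n+v_{n+1})=v_{n-1}^2$, and solve for $u=\log v$ by inverting the linear operator with characteristic roots $1,-2$ subject to $u_{-1}=0$ and boundedness, then contracting in $\ell^\infty$; since the forcing is $O(\epsilon_0\rho^n)$ with $\rho=2^{\mu}<1$, the bounded inverse has norm $O(\epsilon_0)$ and the scheme closes for $f$ large, so this is workable, though the Green-function/contraction estimates are only sketched and would need to be written out. The one point you must fix is the citation: your construction produces \emph{a} solution with $v_n\ge e^{-\delta}$, and to transfer its properties to the given solution you need uniqueness; quoting Theorem~\ref{thm:main_2} is circular as the paper is organized (it is deduced from Theorem~\ref{thm:main_3}, whose proof invokes the very statement you are proving), but quoting Theorem~\ref{thm:esi_unic_sol_inv} with $N_*=1$ is legitimate, since its proof is independent of this theorem, and with that substitution your logic is sound. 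In comparison, the paper's route needs no uniqueness, applies verbatim to every stationary positive $l^2$ solution, and gives the explicit threshold $f>\nu2^{-\beta/3}/(1-2^{\mu})$; your route costs the sequence-space inversion and the appeal to uniqueness, but buys finer information, namely that for large $f$ the viscous stationary solution is a bounded multiplicative perturbation of the inviscid one, $e^{-\delta}\tilde Y_n\le Y_n\le e^{\delta}\tilde Y_n$, which immediately gives both non-regularity and the positive dissipation rate.
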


Before we go into the proofs of these theorems, let us introduce a
useful change of variables, that will come handy in both proofs. If
$Y$ is a stationary solution of~\eqref{eq:system_diadico} then, for
every $n\geq0$, we have
\[
-\nu 2^{\gamma n} Y_n+ 2^{\beta n} Y_{n-1}^2-2^{\beta n+\beta}Y_nY_{n+1}=0
\]
This equation can be made into a recurrence, and the change of
variables that best simplifies its form is
\begin{equation}\label{eq:change_var}
  Z_n:=\nu^{-1}2^{\frac{\beta}{3}(n+2)} Y_n. 
\end{equation}
Since the stationary solution in the inviscid case decreases like
$2^{-\frac{\beta}{3}n}$, the exponent's rate $\frac{\beta}{3}n$ is in
some sense expected.  The system of differential equations for $Z$
becomes
\begin{equation}\label{eq:system_Z}
\left\{\begin{aligned}
Z_{-1} &= \nu^{-1}2^{\frac{\beta}{3}} f=:g\\ 
Z_{n+1}&=\frac{Z_{n-1}^2}{Z_n}- 2^{(\gamma-\frac{2}{3}\beta)n} 
\qquad \qquad \forall n\geq 0 
\end{aligned}\right.
\end{equation}

\begin{proof}[Proof of theorem~\ref{thm:esi_sol_sta_dia_vis}]
Let us consider the change of variable \eqref{eq:change_var}, we have
to show that the system~\eqref{eq:system_Z} has a positive solution
for which $Y$ is $l^2$.  System~\eqref{eq:system_Z} gives a recursion
which, given $Z_{-1}=g$ and $Z_0$ allows to construct the sequence
$(Z_n)_{n\geq-1}$ in a unique way. Any such sequence will give a
stationary componentwise solution. What we want to prove is that there
is some value of $Z_0$ such that this turn out to be a positive $l^2$
solution. Let we exploit the dependence from $Z_0$ by defining a
sequence of real functions
\begin{equation}\label{eq:def_ind_Zn}
\begin{aligned}
Z_{-1}(a)&=g\\
Z_0(a)&=a\\
Z_{n+1}(a)&=\tfrac{Z_{n-1}^2(a)}{Z_n(a)}- 2^{(\gamma-\frac{2}{3}\beta)n} ,\qquad n\geq0
\end{aligned}
\end{equation}
Now we construct a descending sequence of open real intervals
$(I_n)_{n\geq0}$ such that $(0,\infty)=I_0\supset I_1\supset
I_2\supset\dots$ and such that $Z_n$ is continuous and bijective from
$I_n$ to $(0,\infty)$, with $Z_n$ strictly increasing for even $n$ and
strictly decreasing for odd $n$.

Let $I_0=(0,\infty)$. $Z_0(a)$ is monotone increasing, continuous and
bijective from $I_0$ to $(0,\infty)$.

By~\eqref{eq:def_ind_Zn} we have that
$Z_1(a)=g/a^2-2^{(\gamma-\frac{2}{3}\beta)}$ is monotone decreasing,
continuous and bijective from $I_0$ to
$(-2^{(\gamma-\frac{2}{3}\beta)},\infty)$ so there exists a limited
interval $(b_1,c_1):=I_1\subset I_0$ such that $Z_1(a)$ is
monotone decreasing, continuous and bijective from $I_1$ to
$(0,\infty)$.

Now suppose we already proved for $m\leq n$ that $Z_m(a)$ is
continuous and bijective from $I_m$ to $(0,\infty)$, with $Z_m$
strictly increasing for even $m$ and strictly decreasing for odd $m$.

Suppose that $n$ is odd (resp.\ even). Then by~\eqref{eq:def_ind_Zn}
$Z_{n+1}(a)$ is monotone increasing (resp.\ decreasing), continuous
and bijective from $I_n$ to $(-2^{(\gamma-\frac{2}{3}\beta)n},\infty)$
so there exists an interval $(b_{n+1},c_{n+1}):=I_{n+1}\subset
I_n$ such that $Z_{n+1}(a)$ is monotone increasing
(resp.\ decreasing), continuous and bijective from $I_{n+1}$ to
$(0,\infty)$.

Observe moreover that the borders of these intervals are not
definitively constant, since for all $n$, $b_{n+2}\neq b_n$ and
$c_{n+2}\neq c_n$. Hence if we define $b=\lim_n b_n$ and $c=\lim_n
c_n$, it is clear that for all $n$, $b_n<b\leq c<c_n$, that is the
closed interval (possibly degenerate) $[b,c]$ is contained in every
$I_n$.

Now we choose any $\bar a\in[b,c]$ and we know that the sequence
$Z_n(\bar a)$ is strictly positive. We are left to prove that it is
also $l^2$. To this end let $Y_n$ be any stationary, positive
componentwise solution. Let $\energy_n=\sum_{k=0}^n Y_k^2$ in analogy
with the definition for the tree model. We compute the derivative
\[
0
=\frac d{dt}\energy_n(t)
=-\nu\sum_{k\leq n}l_kY_k^2+f^2Y_0-k_{n+1}Y_n^2Y_{n+1},
\]
hence, since $l_k\geq1$, $\energy_n\leq\sum_{k\leq
  n}l_kY_k^2\leq\nu^{-1}f^2Y_0$ for all $n$.
\end{proof}

\begin{proof}[Proof of theorem~\ref{thm:reg_sol_sta_dia_vis}]
Let us consider again system~\eqref{eq:system_Z} and let
$\mu:=\gamma-\frac{2}{3}\beta$. If $\mu>0$ the corrective term goes to
infinity, while if $\mu<0$ it goes to zero, so we expect two different
behaviors in the two cases. We'll show that in the first case $Z_n$
goes to zero super-exponentially for $n\to \infty$, while in the second
one $Z_n\downarrow z$ and $z>0$ if $g$ is large enough.

\noindent
\emph{Case $\mu:=\gamma-\frac{2}{3}\beta\geq0$.}  
From~\eqref{eq:system_Z} we get
\[
2^{\mu n}Z_n^2=Z_{n-1}^2Z_{n}-Z_n^2Z_{n+1}
\]
Sum over $n$ to get
\begin{equation}\label{eq:energy_flux_stationary}
\sum_{k\leq n}2^{\mu k}Z_k^2=g^2Z_0-Z_n^2Z_{n+1}
\end{equation}
Since $\mu\geq0$, by positivity of $Z$, we have
\begin{equation} \label{eq:limZ_n=0}
 \lim_{n\to \infty} Z_n=0.
\end{equation}
From~\eqref{eq:system_Z} and $Z_{n+1}>0$ we get $Z_n<Z_{n-1}^2$
and since by~\eqref{eq:limZ_n=0} $Z_{\bar n}=:\lambda<1$ for some
$\bar n$, by iterating the above equation we get for all $m\geq0$
\[
Z_{\bar{n}+m}\leq\lambda^{2^m}
\]
that is to say that $Z_n$ goes to zero for $n$ going to infinity like
the exponential of an exponential, so for every $s>0$ we have
\[
\sum_n \left(2^{sn} Z_n  \right)^2<+\infty \qquad \text{and} \qquad
\sum_n \left(2^{sn} Y_n  \right)^2<+\infty.
\]
It is now clear that $\lim_nk_{n+1}Y_n^2Y_{n+1}=0$, so $Y$ is
conservative by Definition~\ref{def:conservative}.

\emph{Case $\mu:=\gamma-\frac{2}{3}\beta<0$.}  The first step is to
prove that $Z_n$ is non-increasing in $n$. Suppose by contradiction
that for some $n$ we have $\frac{Z_n}{Z_{n-1}}=\lambda>1$, then we
claim that $\frac{Z_{n+2}}{Z_{n+1}}>\lambda^4>1$ and hence by
induction $\frac{Z_{n+2m}}{Z_{n+2m-1}}>\lambda^{4^m}$.
By~\eqref{eq:system_Z} for all $k\geq0$
\[
Z_{k+1}<\frac{Z^2_{k-1}}{Z_k}
=\frac{Z_{k-1}}{Z_k}Z_{k-1}
\]
This can be used iteratively together with the claim to show that
\begin{multline*}
Z_{n+2m+1}
=\frac{Z^2_{n+2m-1}}{Z_{n+2m}}-2^{\mu(n+2m)}\\
<\frac{Z_{n+2m-1}}{Z_{n+2m}}\frac{Z_{n+2m-3}}{Z_{n+2m-2}}\dots\frac{Z_{n-1}}{Z_{n}}Z_{n-1}-2^{\mu(n+2m)}\\
<Z_{n-1}\lambda^{-4^m}-2^{\mu(n+2m)}
\end{multline*}
so we get a contradiction because $Z_{n+2m+1}<0$ for some $m$.

We prove the claim. Let $x=\frac{2^{\mu n}Z_n}{Z^2_{n-1}}=\frac{2^{\mu
    n}\lambda^2}{Z_n}$.  Observe that
\begin{equation}\label{eq:z_n_plus_one}
 Z_{n+1}
=\frac{Z^2_{n-1}}{Z_n}- 2^{\mu n}
=\frac{2^{\mu n}}x(1-x)
\end{equation}
We divide by $Z_n$ (and we notice that $x<1$),
\begin{equation}\label{eq:ratio_z_n_plus_one}
\frac{Z_{n+1}}{Z_n}
=\lambda^{-2}(1-x)
\end{equation}
Now
\[
Z_{n+2}
=\frac{Z^2_{n}}{Z_{n+1}}- 2^{\mu (n+1)}
>\frac{Z^2_{n}}{Z_{n+1}}- 2^{\mu n}
\]
so dividing by $Z_{n+1}$ and
substituting~\eqref{eq:ratio_z_n_plus_one} and~\eqref{eq:z_n_plus_one}, we get
\[
\frac{Z_{n+2}}{Z_{n+1}}
>\lambda^4(1-x)^{-2}- \frac{2^{\mu n}}{Z_{n+1}}
>\frac{\lambda^4}{1-x}-\frac x{1-x}
\]
Since $\lambda>1>x>0$, it is now clear that
$\frac{\lambda^4-x}{1-x}>\lambda^4$.  So we have proven the claim and
showed that $\{Z_n\}_{n\geq0}$ is non-increasing in $n$.

The last step is to show that for $g$ large enough $Z_n\downarrow
z>0$. By rearranging~\eqref{eq:energy_flux_stationary} and recalling
what we proved above,
\[
Z_n^3
\geq Z_n^2Z_{n+1}
=g^2Z_0-\sum_{k=0}^n2^{\mu k}Z_k^2
\geq g^2Z_0-gZ_0\sum_{k=0}^n2^{\mu k}
> gZ_0\Bigl(g-\frac1{1-2^{\mu}}\Bigr)
\]
so if $g>\frac1{1-2^{\mu}}$ then $Z_n$ converges to a strictly
positive constant $z$.

To prove anomalous dissipation we compute the limit
\[
\lim_{n\to\infty}k_{n+1}Y_n^2Y_{n+1}
=\lim_{n\to\infty}2^{\beta n+\beta}\nu^32^{-\beta n-7\beta/3}Z_n^2Z_{n+1}
=2^{-4\beta/3}\nu^3z^3>0
\]
So by Definition~\ref{def:conservative} there is anomalous
dissipation.
\end{proof}

\subsection{Stationary solutions in the inviscid and viscous case: uniqueness}
We prove uniqueness in the class of stationary positive $l^2$
solutions for the tree dyadic model. The result also holds for the
classic dyadic, because it is a particular case of the former, or by
virtue of the lifting Proposition~\ref{prop:class_to_tree}.
\begin{thm}
\label{thm:esi_unic_sol_inv}
Consider the tree dyadic model~\eqref{eq:system_tree_dyadic_general}
and assume that $\alpha>\tilde\alpha$, where $2^{2\tilde\alpha}=N_*=
\sharp\O j$ is the constant number of children for every node.  Then
there exists a unique stationary positive $l^2$ solution.
\end{thm}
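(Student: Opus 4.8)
The plan is to separate the statement into an existence part and a uniqueness part. For existence, I would simply invoke the results already proved: in the inviscid case ($\nu=0$) Proposition~\ref{prop:sol_sta} gives the explicit solution $X_j=f\cdot2^{-\frac{|j|+1}{3}(2\tilde\alpha+\alpha)}$, which is $l^2$ precisely because $\alpha>\tilde\alpha$; in the viscous case ($\nu>0$) Theorem~\ref{thm:esi_sol_sta_dia_vis} gives a stationary positive $l^2$ solution $Y$ of the classic dyadic model with $\beta=\alpha-\tilde\alpha>0$, which Proposition~\ref{prop:class_to_tree} lifts to a stationary positive $l^2$ solution of the tree model. So existence is essentially free, and the real content is uniqueness.

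For uniqueness, suppose $X$ and $\widetilde X$ are two stationary positive $l^2$ solutions. The first step is to reduce to a statement about the classic dyadic model. A stationary solution on the tree satisfies, for every node $j$,
\[
-\nu d_j X_j + c_j X_{\bar\jmath}^2 = X_j\sum_{k\in\O j}c_k X_k .
\]
Summing the energy-balance identity (the stationary version of~\eqref{eq:energy_derivative}) over a generation, one gets that the total outflux from generation $n$, namely $2\sum_{|k|=n+1}c_k X_{\bar k}^2 X_k$, is controlled, and one can try to show that a stationary positive $l^2$ solution must in fact be ``generation-homogeneous'' in a suitable averaged sense. A cleaner route: define $s_n:=\sum_{|j|=n}X_j^2$ and try to derive a closed recursion or inequality for $s_n$ that forces $X$ to coincide with the lift of a classic-dyadic stationary solution; then appeal to uniqueness for the classic model. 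The difficulty is that a priori the $X_j$ within a generation need not be equal, so this homogenization step is not automatic — this is where I expect the main obstacle to lie.

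To handle that obstacle I would argue by a maximum/comparison principle along the tree. Introduce, for the stationary solution, the flux $\Phi_j:=c_j X_{\bar\jmath}^2 - X_j\sum_{k\in\O j}c_k X_k$, which vanishes identically, and use the telescoping structure: the stationarity equation lets one express $\sum_{k\in\O j}c_k X_k$ purely in terms of $X_j$, $X_{\bar\jmath}$ and $d_j$, i.e.
\[
\sum_{k\in\O j}c_k X_k=\frac{c_jX_{\bar\jmath}^2-\nu d_jX_j}{X_j}
\]
(valid since Proposition~\ref{prop:energy_inequality}(2) gives $X_j>0$). This means the ``aggregate child intensity'' is determined by the parent alone. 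Then I would compare two solutions $X,\widetilde X$ by looking at the first generation $n$ where they differ and at the node $j$ in that generation maximizing $X_j/\widetilde X_j$ (or the difference), and show the recursion propagates the discrepancy downward in a way incompatible with both being in $l^2$ — essentially the same mechanism as the super-exponential blow-up arguments in the proof of Theorem~\ref{thm:reg_sol_sta_dia_vis}. Concretely, one shows that a perturbation of the correct stationary profile either grows super-exponentially (violating $l^2$) or decays faster than every exponential (which, combined with the boundary condition $X_{\bar0}=f$ being fixed, pins down the solution).

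Finally I would assemble the pieces: existence from Proposition~\ref{prop:sol_sta} / Proposition~\ref{prop:class_to_tree} together with Theorem~\ref{thm:esi_sol_sta_dia_vis}; and uniqueness by the comparison argument above, reducing ultimately to the uniqueness of the stationary positive $l^2$ solution of the classic model~\eqref{eq:system_diadico}, which one proves by the recursion analysis in the $Z_n$ variables of~\eqref{eq:system_Z} (the interval-nesting construction in the proof of Theorem~\ref{thm:esi_sol_sta_dia_vis} already shows the set of admissible $Z_0$ is a single point once one also imposes the $l^2$, equivalently summability, constraint — that degeneracy of $[b,c]$ to a point is exactly uniqueness). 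The main obstacle, to repeat, is justifying that within-generation inhomogeneity cannot occur for an $l^2$ stationary solution; everything else is bookkeeping built on results already in the paper.
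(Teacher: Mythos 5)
Your existence step is exactly the paper's (Proposition~\ref{prop:sol_sta} for $\nu=0$; Theorem~\ref{thm:esi_sol_sta_dia_vis} lifted via Proposition~\ref{prop:class_to_tree} for $\nu>0$), but the uniqueness part has a genuine gap, in fact two. First, your final reduction is unsupported: you claim that the interval-nesting construction in the proof of Theorem~\ref{thm:esi_sol_sta_dia_vis} ``already shows the set of admissible $Z_0$ is a single point once one also imposes the $l^2$ constraint''. It does not: that construction only shows the nested intersection $[b,c]$ is nonempty, i.e.\ existence, and in the viscous case imposing $l^2$ selects nothing, because the last lines of that very proof show that \emph{every} positive componentwise stationary solution automatically satisfies $\energy_n\leq\nu^{-1}f^2Y_0$, hence is $l^2$. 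So if $[b,c]$ were not a singleton there would be several positive $l^2$ stationary solutions of the classic model; that it is a singleton is a \emph{consequence} of the uniqueness theorem, not an available ingredient. Classic-model uniqueness needs its own argument (in the paper it is simply the case $N_*=1$ of the tree argument), so the chain ``homogenize, then quote classic uniqueness'' is, as written, circular.

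Second, the homogenization/comparison step itself is not carried out. You rightly identify within-generation inhomogeneity as the crux, but your fix --- take the first generation where the two solutions differ and the node maximizing $X_j/\widetilde X_j$, then ``propagate the discrepancy downward'' --- fails as stated, because the stationarity relation~\eqref{eq:tree_visc_risc_sta} determines only the \emph{sum} $\sum_{k\in\O j}Z_k$ over the children of $j$, not any individual child; a ratio discrepancy at one node gives no pointwise control one generation below. The paper's device is to build two \emph{different} descending paths $(j_m)$, $(k_m)$ from the common parent, choosing alternately the minimizing child for one solution and the maximizing child for the other (so that $\min\leq N_*^{-1}\sum$ and $\max\geq N_*^{-1}\sum$ can be exploited), and to run a two-step induction on the ratios, producing growth of order $\lambda^{2^{m-2}}$ as in~\eqref{eq:ratio1}--\eqref{eq:ratio2}. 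Even then, ratio growth alone is not yet a contradiction: one needs the viscosity term (the bound $W^2_{j_{m-2}}/W_{j_{m-1}}\geq\nu2^{(\gamma-\frac23\alpha)(n+m-2)}$) to force absolute super-exponential growth when $\nu>0$, or, when $\nu=0$, a comparison with the explicit exponentially decaying solution of Proposition~\ref{prop:sol_sta}, to contradict~\eqref{eq:Z_regularity}; your alternative branch (``decays faster than every exponential\dots pins down the solution'') is not substantiated. So you have the right mechanism in spirit, but the steps that make it work --- the alternating min/max choice along two paths, the second-order ratio recursion, and the conversion of ratio growth into an absolute violation of the $l^2$ condition --- are missing.
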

\begin{proof}
Existence is a consequence of Proposition~\ref{prop:sol_sta} in the
inviscid case ($\nu = 0$) and Proposition~\ref{prop:class_to_tree} and
Theorem~\ref{thm:esi_sol_sta_dia_vis} in the viscous case.
 
To prove uniqueness we apply a change of variables similar
to~\eqref{eq:change_var} 
\begin{equation}\label{eq:chng_var_uniq}
Z_j:=2^{\frac{(2+|j|)\alpha}{3}}X_j,
\qquad\forall j\in J
\end{equation}
Then from~\eqref{eq:system_tree_dyadic_general} we have
\begin{equation}\label{eq:tree_visc_risc}
\frac{d}{dt} Z_j = 
- \nu 2^{\gamma|j|} Z_j+
2^{\frac{2}{3}\alpha|j|} Z_{\bar{\jmath}}^2
-\sum_{k\in\O j}2^{\frac{2}{3}\alpha|j|}Z_jZ_k
\end{equation}
so if $X$ is a stationary solution, $Z$ must satisfy
\begin{equation}\label{eq:tree_visc_risc_sta}
\left\{\begin{aligned}
Z_{\bar0}&=f\cdot 2^{\alpha/3}\\
\sum_{k\in\O j}Z_k&=
 \frac{Z_{\bar{\jmath}}^2}{Z_j}
 - \nu 2^{(\gamma-\frac{2}{3}\alpha)|j|} .
\end{aligned}\right.
\end{equation}
Moreover observe that the condition $X\in l^2$ is equivalent to
\begin{equation}\label{eq:Z_regularity}
 \sum_{j\in J} \bigl(2^{-\frac{\alpha}{3}|j|}Z_j \bigr)^2<\infty
\end{equation}

%We proceed to prove uniqueness, or more precisely we will prove that
%there exists only one solution $Z$ and that for this solution,
%$Z_j=Z_k$ whenever $|j|=|k|$. This is expected since we know that
%there exists at least the solution ``lifted'' by
%Proposition~\ref{prop:class_to_tree}.

Assume by contradiction that there are two different stationary
solutions of~\eqref{eq:tree_visc_risc_sta} which we denote by
$W=\{W_j\}_{j\in J}$ and $Z=\{Z_j\}_{j\in J}$. Let $n$ be the smallest
integer such that there exist $j_1\in J$ with $|j_1|=n$ and
$W_{j_1}\neq Z_{j_1}$. Without loss of generality we can take
$\frac{W_{j_1}}{Z_{j_1}}=:\lambda>1$.

Let $j_0=k_0=\bar{\jmath_1}$ and $k_1=j_1$. Extend these to two
sequences of indices $(j_m)_{m\geq0}$ and $(k_m)_{m\geq0}$ with
$j_m\in\O{j_{m-1}}$ and $k_m\in\O{k_{m-1}}$, picking alternatively
among those that maximize or minimize $W_{j_m}$ and $Z_{k_m}$.

More precisely for $m\geq2$ choose $j_m\in\O{j_{m-1}}$ and
$k_m\in\O{k_{m-1}}$ in such a way that if $m$ is even
\begin{align*}
W_{j_m}&=\min\{W_i:i\in\O{j_{m-1}}\} &
Z_{k_m}&=\max\{Z_i:i\in\O{k_{m-1}}\}
\end{align*}
and if $m$ is odd
\begin{align*}
W_{j_m}&=\max\{W_i:i\in\O{j_{m-1}}\} &
Z_{k_m}&=\min\{Z_i:i\in\O{k_{m-1}}\}
\end{align*}
The idea supporting the definition of these sequences is to choose the
indices so that
\[
W_{j_1}<Z_{k_1},\qquad W_{j_2}>Z_{k_2},\qquad W_{j_3}<Z_{k_3},\qquad
\ldots
\]
We will now prove that, with our construction, those inequalities hold
and, moreover, the ratio between $W_m$ and $Z_m$ grows according to
\begin{align}
  \label{eq:ratio1}
  \frac{Z_{k_m}}{W_{j_m}}
\geq \frac{W_{j_{m-1}}}{Z_{k_{m-1}}}\cdot \frac{Z^2_{k_{m-2}}}{W^2_{j_{m-2}}}
>\lambda^{2^{m-2}}
\qquad \qquad 
\forall\, m\geq 2\ \text{even}\\
\label{eq:ratio2}
\frac{W_{j_m}}{Z_{k_m}}
\geq \frac{Z_{k_{m-1}}}{W_{j_{m-1}}}\cdot \frac{W^2_{j_{m-2}}}{Z^2_{k_{m-2}}}
>\lambda^{2^{m-2}}
\qquad \qquad 
\forall\, m\geq 3 \ \text{odd}.
\end{align}
We prove inequalities~\eqref{eq:ratio1} and~\eqref{eq:ratio2} by
induction on $m\geq2$.  First note that for $m=0$ and $m=1$,
\begin{equation}\label{eq:ratio_Z_W_basecase}
\frac{Z_{k_0}}{W_{j_0}}=1
\qquad\text{and}\qquad
\frac{W_{j_1}}{Z_{k_1}}=\lambda
\end{equation}
Now we proceed by induction. Let $m\geq2$ even.  By the definition of
$j_m$, $k_m$ and by~\eqref{eq:tree_visc_risc_sta} we get
\begin{gather}  
  W_{j_m}
=\min_{i\in\O{j_{m-1}}}W_i
\leq N_*^{-1}\sum_{i\in\O{j_{m-1}}}W_i
= N_*^{-1}\biggl[\frac{W^2_{j_{m-2}}}{W_{j_{m-1}}}
-\nu 2^{(\gamma-\frac{2}{3}\alpha)(n+m-2)}\biggr]\\
  Z_{k_m}
=\max_{i\in\O{k_{m-1}}}Z_i
\geq N_*^{-1}\sum_{i\in\O{k_{m-1}}}Z_i
= N_*^{-1}\biggl[\frac{Z^2_{k_{m-2}}}{Z_{k_{m-1}}}
-\nu 2^{(\gamma-\frac{2}{3}\alpha)(n+m-2)}\biggr]\label{eq:Zkm_geq}
\end{gather}
By~\eqref{eq:ratio_Z_W_basecase} when $m=2$ or by inductive
hypothesis~\eqref{eq:ratio1} and~\eqref{eq:ratio2} when $m\geq4$,
\[
\frac{Z^2_{k_{m-2}}}{Z_{k_{m-1}}}\Biggl/\frac{W^2_{j_{m-2}}}{W_{j_{m-1}}}
=\frac{Z^2_{k_{m-2}}}{W^2_{j_{m-2}}}\frac{W_{j_{m-1}}}{Z_{k_{m-1}}}
\geq\begin{cases}
\lambda & m=2 \\
(\lambda^{2^{m-4}})^2\lambda^{2^{m-3}}=\lambda^{2^{m-2}} & m\geq4
\end{cases}
\]
so in particular the ratio is above 1 and, since for every
$a>b>c\geq0$ we have $\frac{a-c}{b-c}\geq\frac{a}{b}$, for $m\geq2$
even
\[
\frac{Z_{k_m}}{W_{j_m}}
=\frac{\frac{Z^2_{k_{m-2}}}{Z_{k_{m-1}}}-\nu 2^{(\gamma-\frac{2}{3}\alpha)(n+m-2)}}
{\frac{W^2_{j_{m-2}}}{W_{j_{m-1}}}-\nu 2^{(\gamma-\frac{2}{3}\alpha)(n+m-2)}}
\geq\frac{Z^2_{k_{m-2}}}{Z_{k_{m-1}}}\Biggl/\frac{W^2_{j_{m-2}}}{W_{j_{m-1}}}
\geq\lambda^{2^{m-2}}
\]
This concludes the inductive step for $m$ even; for $m$ odd the
reasoning is analogous.  We now want to use inequalities
\eqref{eq:ratio1} and \eqref{eq:ratio2} to get a contradiction.  We
will consider separately the cases $\nu>0$ and $\nu=0$.

\noindent\emph{Case $\nu>0$.}  Let $m$ be even; by~\eqref{eq:ratio1}
\[
\frac{Z^2_{k_{m-2}}}{Z_{k_{m-1}}}
>\lambda^{2^{m-2}}\frac{W^2_{j_{m-2}}}{W_{j_{m-1}}}
\]
applying~\eqref{eq:tree_visc_risc_sta} to $W_{j_{m-1}}$ we have
\[
\frac{W^2_{j_{m-2}}}{W_{j_{m-1}}}
\geq\nu2^{(\gamma-\frac{2}{3}\alpha)(n+m-2)}
\]
so from~\eqref{eq:Zkm_geq}, putting everything together, we get
\[
Z_{k_m}
\geq N_*^{-1}\biggl[\frac{Z^2_{k_{m-2}}}{Z_{k_{m-1}}}-\nu2^{(\gamma-\frac{2}{3}\alpha)(n+m-2)}\biggr]
\geq N_*^{-1}\nu2^{(\gamma-\frac{2}{3}\alpha)(n+m-2)}\bigl(\lambda^{2^{m-2}}-1\bigr)
\]
For $m$ even going to infinity we have obviously that $Z_{k_m}$ grows
as the exponential of an exponential, which is in contradiction with
\eqref{eq:Z_regularity}.

\noindent\emph{Case $\nu=0$.}  If $\nu=0$ we already know one
explicit stationary solution, by Proposition~\ref{prop:sol_sta},
namely $X_j=f 2^{-\frac{|j|+1}{3}(2\tilde\alpha+\alpha)}$.  By the
usual change of variables~\eqref{eq:chng_var_uniq}
$V_j=f2^{\frac23(\alpha-\tilde\alpha(|j|-1))}$ is a solution
of~\eqref{eq:tree_visc_risc_sta} satisfying the regularity
condition~\eqref{eq:Z_regularity}. Without loss of generality we can
suppose that $W_j=V_j$ or $Z_j=V_j$. In the first case, for $m$ even
\[
Z_{k_m}>W_{j_m}\lambda^{2^{m-2}}
=f2^{\frac23(\alpha-\tilde\alpha(n+m-2)}\lambda^{2^{m-2}}
\] 
in the second case for $m$ odd
\[
W_{j_m}>Z_{k_m}\lambda^{2^{m-2}}
=f2^{\frac23(\alpha-\tilde\alpha(n+m-2)}\lambda^{2^{m-2}}
\] 
In both cases the right-hand side grows super-exponentially as
$m\to\infty$ and this is in contradiction with \eqref{eq:Z_regularity}.
\end{proof}

\begin{proof}[Proof of Theorem~\ref{thm:main_3}]
Existence and uniqueness are given by
Theorem~\ref{thm:esi_unic_sol_inv}.

If $\nu=0$ the solution is identified by
Proposition~\ref{prop:sol_sta}. If $\nu>0$, by uniqueness, the
solution is the lift of the stationary solution of the classic dyadic
with $\beta=\alpha-\tilde\alpha$, as per
Proposition~\ref{prop:class_to_tree}. Then the two regimes are proven
in Theorem~\ref{thm:reg_sol_sta_dia_vis}.
\end{proof}

{\small
\bibliographystyle{habbrv}
\bibliography{bibdyadic}
}
\end{document}